\numberwithin{equation}{section}
\newtheorem{proposition}{Proposition}[section]
\newtheorem{lemma}{Lemma}[section]
\newtheorem{theorem}{Theorem}[section]
\newtheorem{corollary}{Corollary}[section]
\newtheorem{remark}{Remark}[section]
\newtheorem{conjecture}{Conjecture}[section]
\begin{document}
\title{ Linear adjoint restriction estimates for paraboloid}
\author{Changxing Miao}
\address{Institute of Applied Physics and Computational Mathematics, P. O. Box 8009, Beijing, China, 100088}
\email{miao\_changxing@iapcm.ac.cn}

\author{Junyong Zhang}
\address{Department of Mathematics, Beijing Institute of Technology, Beijing, China, 100081; Cardiff University, UK} \email{zhang\_junyong@bit.edu.cn; ZhangJ107@cardiff.ac.uk}

\author{Jiqiang. Zheng}
\address{Institute of Applied Physics and Computational Mathematics, P. O. Box 8009, Beijing, China, 100088}
\email{zhengjiqiang@gmail.com}

\begin{abstract}
We prove  a class of modified paraboloid restriction estimates with a loss of angular derivatives
for the full set of paraboloid restriction conjecture indices. This
result generalizes the paraboloid restriction estimate in radial case from [Shao, Rev. Mat. Iberoam. 25(2009), 1127-1168],
 as well as the result from [Miao et al. Proc. AMS 140(2012), 2091-2102].
 As an application, we
show a local smoothing estimate for a solution of the linear Schr\"odinger equation under the assumption that the initial datum has additional angular regularity.
\end{abstract}

\subjclass[2000]{42B37, 42B10, 42B25, 35Q55}
\keywords{ Linear adjoint restriction estimate, local restriction estimate, Bessel function, spherical harmonics, local smoothing.}

\maketitle

\section{Introduction}

Let  $S$ be a non-empty smooth compact subset of the paraboloid,
$$\big\{~(\tau,\xi)\in\R\times \R^n: \tau=|\xi|^2~\big\},$$ where $n\geq1$.
 We denote by $d\sigma$ the pull-back of
the $n$-dimensional Lebesgue measure $d\xi$ under the
projection map $(\tau,\xi)\mapsto \xi$.
Let $f$ be a Schwartz function and define the inverse space-time Fourier transform of the
measure $fd\sigma$
\begin{align}\label{def}
(fd\sigma)^{\vee}(t,x)&=\int_{S} f(\tau, \xi)e^{2\pi
i(x\cdot \xi+t\tau)}d\sigma(\xi) \\
&=\int_{\R^n} f(|\xi|^2,
\xi)e^{2\pi i(x\cdot \xi+t|\xi|^2)}d\xi.\nonumber
\end{align}
The classical linear adjoint restriction estimate for the paraboloid reads
\begin{equation}\label{res}
\|(fd\sigma)^{\vee}\|_{L^q_{t,x}(\R\times\R^n)}\leq
C_{p,q,n,S}\|f\|_{L^p(S;d\sigma)},
\end{equation}
where $1\leq p,q\leq\infty$. The famous restriction
problem is to find the optimal range of $p$ and $q$ such that the estimate \eqref{res} holds.
It is known that the condition
\begin{equation}\label{q-p}
q>\frac{2(n+1)}{n} \quad\text{and}\quad \frac{n+2}q\leq \frac{n}{p'},
\end{equation}
is necessary  for
\eqref{res}, see \cite{Stein, Tao}. Here $p'$ denotes the conjugate exponent of  $p$.
 The adjoint restriction estimate conjecture on paraboloid reads as follows.
\begin{conjecture}\label{conjecture} The
inequality \eqref{res} holds true if and only if inequalities \eqref{q-p} are valid.
\end{conjecture}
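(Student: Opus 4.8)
\medskip

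The assertion is a biconditional whose two halves have entirely different character: \textbf{necessity} is a short computation that I would carry out in full, while \textbf{sufficiency} is (for $n\ge 2$) one of the central open problems of Euclidean harmonic analysis, so there I can only propose the program that currently gets closest.

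\textbf{Necessity (the ``only if'' direction).} I would produce two families of test functions. First, the Knapp example: for small $\delta>0$ let $f_\delta$ be the characteristic function of a cap $C_\delta\subset S$ of radius $\delta$ about a point of $S$. On the dual slab $\{(t,x):|t|\le c\delta^{-2},\ |x|\le c\delta^{-1}\}$ the phase $x\cdot\xi+t|\xi|^2$ varies by $O(1)$ over $C_\delta$, so $|(f_\delta\,\mathrm{d}\sigma)^{\vee}|\gtrsim\delta^{n}$ there and hence $\|(f_\delta\,\mathrm{d}\sigma)^{\vee}\|_{L^q}\gtrsim\delta^{n}(\delta^{-2}\cdot\delta^{-n})^{1/q}=\delta^{\,n-(n+2)/q}$, while $\|f_\delta\|_{L^p(\mathrm{d}\sigma)}=\delta^{n/p}$; inserting this into \eqref{res} and letting $\delta\to0$ forces $n-(n+2)/q\ge n/p$, i.e. $\tfrac{n+2}{q}\le\tfrac{n}{p'}$. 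Second, let $f=\psi$ be a fixed smooth bump supported in the interior of $S$; since the Hessian of $\xi\mapsto|\xi|^2$ is nondegenerate, stationary phase gives $|(\psi\,\mathrm{d}\sigma)^{\vee}(t,x)|\gtrsim|t|^{-n/2}$ on a region of the form $\{|x|\le c|t|\}$, so $\|(\psi\,\mathrm{d}\sigma)^{\vee}\|_{L^q}^q\gtrsim\int_{|t|\ge1}|t|^{n}\,|t|^{-nq/2}\,\mathrm{d}t$, which is finite only if $nq/2-n>1$, i.e. $q>\tfrac{2(n+1)}{n}$. These are exactly the two conditions in \eqref{q-p}.

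\textbf{Sufficiency (the ``if'' direction): reduction.} Write $q_n=\tfrac{2(n+1)}{n}$. The plan is, first, to pass from the global estimate \eqref{res} to a local one via Tao's $\varepsilon$-removal lemma, so that it suffices to prove $\|(f\,\mathrm{d}\sigma)^{\vee}\|_{L^q(B_R)}\lesssim_{\varepsilon}R^{\varepsilon}\|f\|_{L^p(\mathrm{d}\sigma)}$ for every $R\ge1$, every $\varepsilon>0$, and every $(p,q)$ admissible in \eqref{q-p}. Second, to interpolate against the trivial bound $\|(f\,\mathrm{d}\sigma)^{\vee}\|_{L^\infty}\le\|f\|_{L^1(\mathrm{d}\sigma)}$ and the Stein--Tomas bound $\|(f\,\mathrm{d}\sigma)^{\vee}\|_{L^{2(n+2)/n}}\lesssim\|f\|_{L^2(\mathrm{d}\sigma)}$ (which is precisely \eqref{q-p} at $p=2$), so that Conjecture~\ref{conjecture} reduces to the endpoint extension estimate $\|(g\,\mathrm{d}\sigma)^{\vee}\|_{L^q(\R^{n+1})}\lesssim\|g\|_{L^\infty(\mathrm{d}\sigma)}$ for every $q>q_n$.

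\textbf{Sufficiency: the hard core, and where the difficulty concentrates.} For $n=1$ this endpoint estimate is classical (the $L^4$ convolution/orthogonality argument for curves of nonvanishing curvature, Fefferman--Stein), so Conjecture~\ref{conjecture} holds for the parabola in $\R^2$ and this serves as a consistency check on the reductions above. For $n\ge2$ the estimate is open, and the route I would follow is the bilinear--to--multilinear--to--polynomial program: (i) Whitney-decompose $S$ into pairs of transverse caps and apply the Tao--Vargas--Vega machinery to reduce to a bilinear extension estimate $\|(g_1\,\mathrm{d}\sigma)^{\vee}(g_2\,\mathrm{d}\sigma)^{\vee}\|_{L^{q/2}}\lesssim\|g_1\|_{2}\|g_2\|_{2}$ for transverse pieces, which one proves by a wave-packet decomposition and induction on scales, yielding linear restriction for $q>\tfrac{2(n+3)}{n+1}$; (ii) improve this by combining the Bennett--Carbery--Tao multilinear restriction/Kakeya inequality with the Bourgain--Guth broad--narrow decomposition to pass from the $(n+1)$-linear estimate back to a linear one on a larger range; (iii) push further with Guth's polynomial partitioning and its refinements (Hickman--Rogers, Guth--Hickman--Iliopoulou, Wang), organizing the wave packets according to their incidence with a low-degree algebraic variety. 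The main obstacle — the reason this remains a program rather than a proof — is that at step (iii) the transverse/tangential dichotomy is still lossy: wave packets that cluster inside low-degree varieties are not yet controlled sharply enough to reach all the way down to $q>q_n$, and a genuinely new geometric or arithmetic input appears to be required. I would therefore expect essentially all of the residual difficulty to sit in step (iii).
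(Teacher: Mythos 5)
You were asked about Conjecture \ref{conjecture}, and the first thing to note is that the paper contains no proof of it: the necessary conditions \eqref{q-p} are cited to \cite{Stein, Tao}, the sufficiency direction is explicitly described in the introduction as open for $n\ge2$ (what the paper actually proves, Theorem \ref{thm}, is a weaker substitute for \eqref{res} with a loss of angular derivatives), so the only part of your proposal that can be checked against a complete argument is the ``only if'' half. That half of yours is correct and is the standard argument: the Knapp cap of radius $\delta$ gives $\|(f_\delta\,\mathrm{d}\sigma)^{\vee}\|_{L^q}\gtrsim \delta^{\,n-(n+2)/q}$ against $\|f_\delta\|_{L^p(\mathrm{d}\sigma)}\sim\delta^{n/p}$, forcing $(n+2)/q\le n/p'$, and the stationary-phase lower bound $|(\psi\,\mathrm{d}\sigma)^{\vee}(t,x)|\gtrsim |t|^{-n/2}$ on a set of $x$-measure $\sim|t|^{n}$ forces $q>2(n+1)/n$; your remark that the ``if'' half is classical for $n=1$ is also consistent with the paper (\cite{FS, Zygmund}).

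For $n\ge2$ the ``if'' direction is where the genuine gap lies, and it is unavoidable: you do not prove it, you describe the bilinear, multilinear and polynomial-partitioning program (cf.\ \cite{Tao3, BG}) and correctly locate where it currently stops short of $q>2(n+1)/n$; that is an accurate survey, not a proof, so the biconditional is not established (nor could it be at present, which is precisely why the paper retreats to Theorem \ref{thm}). One concrete step of your reduction is also wrong as stated and should be repaired if you keep it: interpolating the desired $L^\infty\to L^q$ bounds for $q>2(n+1)/n$ with Stein--Tomas $L^2\to L^{2(n+2)/n}$ and the trivial $L^1\to L^\infty$ bound does not yield the full range \eqref{q-p}. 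Since $\mathrm{d}\sigma$ is a finite measure, H\"older only lets you increase $p$, so for fixed $q$ the scaling-critical exponent with $(n+2)/q=n/p'$ is the strongest case and $p=\infty$ the weakest; the convex hull of the exponent pairs you interpolate misses the portion of the critical line with $2(n+1)/n<q<2(n+2)/n$ near the corner $p=q=2(n+1)/n$. Indeed, reaching scaling-line points with $q$ close to $2(n+1)/n$ by such interpolation would require an $L^\infty\to L^{q_0}$ bound with $q_0$ close to $(n+2)/n<2(n+1)/n$, which is false. The reduction of the full conjectured range to the $p=\infty$ estimates is known, but it relies on the $\varepsilon$-removal and local-restriction rescaling machinery (as in \cite{Tao1}) or on the Tao--Vargas--Vega bilinear-to-linear method \cite{TVV}, which produces scaling-line exponents directly, rather than on bare interpolation against Stein--Tomas.
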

There is a large amount of literature on this problem. For $n=1$, Conjecture \ref{conjecture} was proved by
Fefferman-Stein \cite{FS} for the non-endpoint case and by Zygmund \cite{Zygmund} for the endpoint case.
Conjecture \ref{conjecture} in  high dimension case becomes  much more difficult.
For $n\geq2$, Tomas \cite{Tomas} showed \eqref{res} for
$q>{2(n+2)}/n$, and Stein \cite{Stein1} fixed the limit case
$q={2(n+2)}/n$. Bourgain \cite{Bourgain} further proved estimate
\eqref{res} for $q>2(n+2)/n-\epsilon_n$ with some $\epsilon_n>0$; in
particular, $\epsilon_n=\frac2{15}$ when $n=2$. Further improvements were made by Moyua-Vargas-Vega \cite{MVV}
and Wolff \cite{Wolff}. Tao \cite{Tao3} used the bilinear argument to
show that estimate \eqref{res} holds true for $q>{2(n+3)}/{(n+1)}$ with $n\geq2$. This result
was improved by Bourgain-Guth \cite{BG} when $n\geq4$. This
conjecture is so difficult that it remains open up to now. For more details, we
refer the reader to \cite{BG, Tao, Tao2, Tao3, TVV, Wolff}.
\vspace{0.2cm}

On the other hand, the restriction conjecture becomes simpler (but not
trivial) when a test function has some angular regularity.  For
example,  Conjecture \ref{conjecture} is proved by Shao
\cite{Shao} when test functions are cylindrically symmetric and are
supported on a dyadic subset of the paraboloid in the form of
$$\Big\{(\tau,\xi)\in \R\times\R^n:\; M\leq |\xi|\leq2M,\;\;  \tau=|\xi|^2,\;\; M\in 2^{\Z}\Big\}.$$
Indeed, many famous  conjectures in harmonic analysis (such as Fourier
restriction estimates, Bochner-Riesz estimate  etc.)
have easier counterparts when the corresponding operators act on radial
functions. Let $\mathbb{S}^{n-1}$ denote the unit sphere  in $\R^n$ and $L^q_{\text{sph}} := L^{q}_\theta(\mathbb{S}^{n-1})$, the intermediate situation is to replace the $L^q(\R^n)$ by $L^q_{r^{n-1}dr}L^2_{\text{sph}}$ in \eqref{res}.
 This intermediate
case has been settled for  adjoint restriction estimates for a cone
by the authors of \cite{MZZ}. More precisely, if $S$ is a non-empty
smooth compact subset of the cone:
$$S=\big\{(\tau,\xi)\in\R\times \R^n:\;\; \tau=|\xi|\big\},$$
then for $q>{2n}/(n-1)$ and $(n+1)/q\leq (n-1)/p'$ we have
\begin{equation}\label{mres}
\|(fd\sigma)^{\vee}\|_{L^q_t(\R;L^q_{r^{n-1}dr}L^{2}_{\text{sph}})}\leq
C_{p,q,n,S}\|f\|_{L^p(S;d\sigma)}.
\end{equation}
The $L^2_{\text{sph}}$-norm allows us to use
spherical harmonic expanding, so the problem is converted to $L^q(\ell^2)$-bounds for sequences of operators $\{H_{k}\}$
where each $H_k$ is an operator acting on radial functions. The pioneering paper using such intermediate space is the Mockenhaupt
Diploma in which he proved weighted $L^p$ inequalities and
then sharp $L^p_{\mathrm{rad}}(L^2_\mathrm{sph})\to L^p_{\mathrm{rad}}(L^2_\mathrm{sph})$ estimates for the disc multiplier operator,
see either Mockenhaupt \cite{Moc} or C\'ordoba \cite{Cor}.
Sharp endpoint bounds for the disk multiplier were obtained by Carbery-Romera-Soria \cite{CRS}.
M\"uller-Seeger \cite{MS} established some sharp mixed spacetime $L^p_{\mathrm{rad}}(L^2_\mathrm{sph})$ estimates in order
to study a local smoothing of solutions for the linear wave equation. C\'ordoba-Latorre \cite{CL} revisited some classical conjecture including restriction estimate in harmonic analysis in this kind of mixed space-time. Gigante-Soria \cite{GS} studied
a related mixed norm problem for Schr\"odinger maximal operators.
Concerning the sphere restriction conjecture, Carli-Grafakos \cite{CG} also treated the same problem for spherically-symmetric functions and Cho-Guo-Lee\cite{CGL} showed
a restriction estimate for $q>2(n+1)/n$ and $s\geq(n+2)/q-n/2$
\begin{equation}\label{sres}
\left\|\int_{\mathbb{S}^{n}} e^{2\pi ix\cdot\xi} f(\xi)d\sigma(\xi)\right\|_{L^q(\R^{n+1})}\leq
C\|f\|_{H^s(\mathbb{S}^{n})}, \quad x\in \R^{n+1},
\end{equation}
where $d\sigma$ is the induced Lebesgue measure on $\mathbb{S}^{n}$ and $H^s(\mathbb{S}^{n})$ denote the $L^2$-Sobolev space of order $s$ on the sphere.
An advantage of the proof consists in a fact that inequality \eqref{sres} is based on $L^2$-spaces. The advantage of using the $L^2$-based Hilbert space also allows us to use effective
the $TT^*$ arguments to obtain Strichartz estimate with a wider range of admissible indexes by compensating with extra regularity in angular direction; see Sterbenz \cite{Sterbenz} for wave equation, Cho-Lee\cite{CL} for
general dispersive equations and the authors \cite{MZZ1} for wave equation with an inverse-square potential. Concerning other results in this direction,  Cho-Hwang-Kwon-Lee \cite{CHKL} studied profile
decompositions of fractional Schr\"odinger equations under the angular regularity assumption.\vspace{0.2cm}

In this paper, we prove that estimate \eqref{res} holds  for all $p,q$ in \eqref{q-p} by compensating with some loss of angular derivatives. Our strategy is to use a spherical harmonic expanding as well as localized restriction estimates. In contrast to the radial case, e.g. \cite{Shao, CG},
the main difficulty  comes from  the  asymptotic
behavior of the Bessel function $J_{\nu}(r)$
 when  $\nu\gg1$. It is worth to point out
that the method of treating cone restriction \cite{MZZ}
 is not valid since it can not be used to exploit the curvature property
 of paraboloid multiplier $e^{it|\xi|^2}$.  We note that the bilinear argument used in \cite{Shao}, which is in spirit of
Carleson-Sj\"olin argument or equivalently the $TT^*$ argument, can be used to deal with
the oscillation of the paraboloid multiplier. To use this argument, one needs to write the Bessel function $J_\nu(r)\sim c_\nu r^{-1/2}e^{ir}$ when $r\gg1$.
This expression works well for small $\nu$ (corresponding to the radial case) but
it seems complicate  to write the Bessel function in that form when $\nu\gg1$. Indeed, as in \cite{Zhang}, one can do this when $\nu^2\ll r$, but it will cause more loss of derivative for the case
 $\nu\lesssim r\lesssim \nu^2$,  since it is difficult to capture simultaneously
the oscillation and decay behavior of $J_{\nu}(r)$. Our new idea here is to establish a $L^4_{t,x}$-localized restriction estimate by directly analyzing
the kernel associated with the Bessel function. The key ingredient
is to explore the decay and oscillation property of $J_\nu(r)$ for $r\gg \nu$,  and resonant  property of paraboloid multiplier.
We also have to overcome low decay shortage of $J_{\nu}(r)$ (when $\nu\sim r\gg1$) by compensating a loss of angular regularity.
\vspace{0.2cm}

Before stating the main theorem, we introduce some notation. Incorporating the
angular regularity, we set the infinitesimal generators of the
rotations on Euclidean space:
\begin{equation*}
\Omega_{j,k}:=x_j\partial_k-x_k\partial_j
\end{equation*}
and define for $s\in\R$
\begin{equation*}
\Delta_\theta:=\sum_{j<k}\Omega_{j,k}^2,\quad
|\Omega|^s=(-\Delta_{\theta})^{\frac{s}2}.
\end{equation*}
Hence $\Delta_{\theta}$ is the Laplace-Beltrami operator on
$\mathbb{S}^{n-1}$. Define the Sobolev norm $\|\cdot\|_{H^{s,p}_{\text{sph}}(\R^n)}$ by setting
\begin{equation}
\|g\|^p_{H^{s,p}_{\text{sph}}(\R^n)}=\int_0^\infty\int_{\mathbb{S}^{n-1}}|(1-\Delta_{\theta})^{s/2}g(r\theta)|^p d\theta ~r^{n-1}dr.
\end{equation}
Given a constant $A$, we briefly write $A+\epsilon$ as $A_+$ or
$A-\epsilon$ as $A_-$ for $0<\epsilon\ll 1$. \vspace{0.2cm}

Our main result is the following one.
\begin{theorem}\label{thm} Let $n\geq2$. The following estimates hold for all Schwartz functions $f$

 $\bullet$ if $q_0=(2(n+1)/n)_+$ and $(n+2)/q_0=n/p_0'$, then
\begin{equation}\label{q0}
\|(fd\sigma)^{\vee}\|_{L^{q_0}_{t,x}(\R\times\R^n)}\leq
C_{p,q_0,n,S}\|f(|\xi|^2,\xi)\|_{H^{\sigma_0,p_0}_{\mathrm{sph}}(\R^n_{\xi})},
\end{equation}
where  $\sigma_0=(n-2)\big(\frac12-\frac1{q_0}\big)+\frac2{q_0}$;

$\bullet$  if $1\leq q,p\leq \infty$ satisfy \eqref{q-p}, then
\begin{equation}\label{main}
\|(fd\sigma)^{\vee}\|_{L^q_{t,x}(\R\times\R^n)}\leq
C_{p,q,n,S}\|(1+|\Omega|)^{s} f\|_{L^p(S;d\sigma)},
\end{equation} where $s=s(q,n)=\sigma_0\alpha$ and $0\leq\alpha\leq1$ satisfying
$1/q=\alpha/q_0+(1-\alpha)/q_1$.
Here  $q_1=q(n)_+$ with $q(n)=2+12/(4n+1-k)$ if $n+1\equiv k (\text{mod}~3), k=-1,0,1$ as in Bourgain-Guth\cite[Theorem 1]{BG}.
\end{theorem}

\begin{remark}
Estimate \eqref{main} is an interpolation consequence of \eqref{q0} and $L^p$-estimates in Bourgain-Guth\cite{BG}. Inequality \eqref{main} leads to the  linear adjoint restriction
estimate when $q\in (2(n+1)/n, q(n)]$ with some loss of angular derivatives.
\end{remark}

\begin{remark}
Since the sphere $\mathbb{S}^n=\{(\tau,\xi): |\tau|^2+|\xi|^2=1\}$ is closely related to the paraboloid in sense of Taylor expansion $\sqrt{1-\rho^2}=1-\frac12 \rho^2+O(\rho^4)$ near $\rho=0$,
it seems to be possible to show some modified version of \eqref{sres} with $H^{s,p}(\mathbb{S}^n)$-norm on right hand side.
\end{remark}

As an application of the modified restriction estimate, we show a
result on the local smoothing estimate for the Sch\"odinger equation for
initial data with  additional conditions angular regularity by  Rogers's
argument in \cite{Rogers}.
Our result here extend \cite [Theorem 1] {Rogers} from $q>2(n+3)/(n+1)$ to
$q>2(n+1)/n$ under the assumption that initial data has additional angular regularity. \vspace{0.2cm}

More precisely, we have the following local smoothing result.
\begin{corollary}\label{cor} Let $n\geq2$, $q>2(n+1)/n$ and $s$ be  as in Theorem \ref{thm}. Then
\begin{equation}\label{1.4}
\|e^{it\Delta}u_0\|_{L^q_{t,x}([0,1]\times\R^n)}\leq
C\big\|(1+|\Omega|)^su_0\big\|_{W^{\alpha,q}(\R^n)},
\end{equation}
where $\alpha>2n(1/2-1/q)-2/q$ and $W^{\alpha,q}(\R^n)$ is the Sobolev space.
\end{corollary}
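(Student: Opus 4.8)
The proof is the one of Rogers \cite{Rogers}, with Theorem \ref{thm} in the place of the restriction estimate used there. First I would run a Littlewood--Paley decomposition $u_0=P_{\le1}u_0+\sum_{\lambda\in2^{\mathbb N}}P_\lambda u_0$. The low-frequency part is immediate: for $|t|\le1$ the operator $e^{it\Delta}P_{\le1}$ has a convolution kernel bounded in $L^1$ uniformly in $t$, so $\|e^{it\Delta}P_{\le1}u_0\|_{L^q_{t,x}([0,1]\times\R^n)}\lesssim\|P_{\le1}u_0\|_{L^q_x}$, which is dominated by the right side of \eqref{1.4}. Since $q\ge2$, the Littlewood--Paley theorem on $\R^{n+1}$ together with Minkowski's inequality gives $\|e^{it\Delta}u_0\|_{L^q_{t,x}}\lesssim\big(\sum_\lambda\|e^{it\Delta}P_\lambda u_0\|_{L^q_{t,x}([0,1]\times\R^n)}^2\big)^{1/2}$, and in the same way the right side of \eqref{1.4} controls $\big(\sum_\lambda\lambda^{2\alpha}\|(1+|\Omega|)^sP_\lambda u_0\|_{L^q_x}^2\big)^{1/2}$ (recall $|\Omega|$ commutes with $P_\lambda$). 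Hence it suffices to prove, for each dyadic $\lambda\ge1$,
\begin{equation}\label{per-freq}
\|e^{it\Delta}P_\lambda u_0\|_{L^q_{t,x}([0,1]\times\R^n)}\lesssim\lambda^{\alpha_0}\,\|(1+|\Omega|)^sP_\lambda u_0\|_{L^q_x},\qquad\alpha_0:=2n\Big(\tfrac12-\tfrac1q\Big)-\tfrac2q ,
\end{equation}
because $\alpha>\alpha_0$ makes $\sum_\lambda\lambda^{2(\alpha_0-\alpha)}$ a convergent geometric series, which is exactly what permits the passage from an $\ell^1$ to an $\ell^2$ sum over the dyadic pieces.

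Next comes the parabolic rescaling $\xi\mapsto\lambda\eta$, $(t,x)\mapsto(\lambda^2t,\lambda x)$, which moves the frequency support of $P_\lambda u_0$ to $|\eta|\sim1$ and turns $[0,1]$ into $[0,\lambda^2]$. The generators $\Omega_{j,k}$ are homogeneous of degree $0$, so they commute with dilations, and $|\Omega|$ commutes with the Fourier transform; hence the weight $(1+|\Omega|)^s$ is unaffected by this change of variables, and tracking the Jacobians turns \eqref{per-freq} into the unit-frequency local smoothing estimate
\begin{equation}\label{unit-T}
\|e^{i\tau\Delta}v_0\|_{L^q_{\tau,y}([0,T]\times\R^n)}\lesssim T^{\,n(1/2-1/q)}\,\|(1+|\Omega|)^sv_0\|_{L^q_y},\qquad T=\lambda^2 ,
\end{equation}
for every $v_0$ with $\widehat{v_0}$ supported in $|\eta|\sim1$.

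It then remains to deduce \eqref{unit-T} from Theorem \ref{thm}, exactly as in \cite{Rogers}. Cutting $[0,T]$ into $O(T)$ unit intervals and writing $e^{i\tau\Delta}v_0=e^{i(\tau-k)\Delta}w_k$ with $w_k=e^{ik\Delta}v_0$ on $[k,k+1]$, one observes that $\widehat{w_k}$ differs from $\widehat{v_0}$ only by the radial unimodular factor $e^{-ik|\eta|^2}$, which—being a function of $|\eta|$ alone—commutes with $(1+|\Omega|)^s$; consequently any norm of $(1+|\Omega|)^s\widehat{w_k}$ that depends only on the modulus equals the corresponding norm of $(1+|\Omega|)^s\widehat{v_0}$, independently of $k$. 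Applying Theorem \ref{thm} on the fixed compact piece $S=\{(\tau,\eta):\tau=|\eta|^2,\ |\eta|\sim1\}$—legitimate precisely because $q>2(n+1)/n$ guarantees that \eqref{q-p} holds for a suitable exponent $p$—and summing over the $O(T)$ slabs produces a factor $T^{1/q}$, and $T^{1/q}\le T^{n(1/2-1/q)}$, again exactly because $q>2(n+1)/n$. Converting the frequency-side norm that Theorem \ref{thm} delivers into the physical $L^q$-based Sobolev norm appearing in \eqref{unit-T}, using the frequency localization of $v_0$, is carried out as in \cite{Rogers}; this yields \eqref{unit-T}, and undoing the rescaling and resumming the dyadic pieces gives \eqref{1.4}.

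The one step that really needs care is this last reconciliation of Lebesgue exponents—between the Fourier side of the restriction estimate Theorem \ref{thm} and the physical $L^q$-Sobolev norm demanded on the data—carried out while paying only the admissible loss $T^{n(1/2-1/q)}=\lambda^{2n(1/2-1/q)}$ over the long rescaled interval $[0,\lambda^2]$; this is precisely Rogers's analysis, and the only new ingredient is that Theorem \ref{thm} extends the admissible range of $q$ down to $(2(n+1)/n,\,q(n))$ at the cost of the angular loss $s$, which is exactly what upgrades \cite[Theorem 1]{Rogers}.
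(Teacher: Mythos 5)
Your outline reduces correctly to a unit-frequency estimate of the form \eqref{1.4} rescaled to $[0,\lambda^2]$, but the step you defer to Rogers --- ``converting the frequency-side norm that Theorem \ref{thm} delivers into the physical $L^q$-based Sobolev norm'' --- is not a routine reconciliation of exponents; it is the entire content of the argument, and the scaffolding you build around it (cutting $[0,T]$ into $O(T)$ unit slabs and summing to get $T^{1/q}$) is not compatible with how that conversion actually works. Concretely: Theorem \ref{thm} applied to $w_k=e^{ik\Delta}v_0$ controls $\|e^{i\tau\Delta}w_k\|_{L^q_{\tau,y}(\R\times\R^n)}$ by $\|(1+|\Omega|)^s\widehat{v_0}\|_{L^p}$ (you are right that the radial phase $e^{-ik|\eta|^2}$ commutes with $(1+|\Omega|)^s$), but there is no inequality $\|\widehat{v_0}\|_{L^p(\R^n)}\lesssim\|v_0\|_{L^q(\R^n)}$ for frequency-localized $v_0$: Hausdorff--Young gives $\|\widehat{v_0}\|_{L^p}\le\|v_0\|_{L^{p'}}$ with $p'=nq/(n+2)<q$, and an $L^{p'}$ norm is not dominated by an $L^q$ norm globally in space (Bernstein goes the other way). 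So the per-slab bound you need simply does not follow from Theorem \ref{thm}, and the inequality $T^{1/q}\le T^{n(1/2-1/q)}$ produces the final exponent by fiat rather than by tracking where the loss actually occurs.

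The mechanism in the paper (following Rogers) is different and is what makes the conversion possible: one uses the identity $e^{it\Delta}f=(it)^{-n/2}e^{-i|x|^2/4t}\bigl(e^{i\Delta/t}\hat f\bigr)(-x/2t)$ to swap the roles of $f$ and $\hat f$, so that on $|t|\sim N^2$ the problem becomes an estimate for $e^{is\Delta}\hat f$ with $|s|\sim N^{-2}$ and $|x|\lesssim1$; applying Theorem \ref{thm} there puts $\|f\|_{L^p}$ (a physical-side norm) on the right. The spatial localization $|x|\lesssim1$ is essential, because the passage from $L^p$ to $L^q$ data norms is only affordable locally: Rogers's Lemma 8 then globalizes in $x$ at the quantified cost $N^{2n(1/p-1/q)+\epsilon}$, and it is exactly this loss, combined with the Jacobian factors from the identity and a dyadic (not unit-length) decomposition of the time interval near $t=0$, that produces the exponent $2n(1/2-1/q)-2/q$. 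None of these ingredients appears in your proposal, so as written the proof has a genuine gap at its central step.
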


This paper is organized as follows: In Section 2, we introduce notation and present some
basic  facts about spherical harmonics and Bessel functions. Furthermore, we use the stationary
phase argument to prove some properties of  Bessel functions.
Section 3 is devoted to the proof of Theorem \ref{thm}. In Section
4, we prove the key Proposition \ref{linear estimates}. We prove Corollary \ref{cor} in the final section.\vspace{0.2cm}

{\bf Acknowledgements:}\quad   The authors would like to express
their great gratitude to S. Shao for his helpful discussions. The authors were supported by the NSFC under
grants 11771041, 11831004 and H2020-MSCA-IF-2017(790623).\vspace{0.2cm}

\section{Preliminaries}

\subsection{Notation.}  We use $A\lesssim B$ to denote the
statement that $A\leq CB$ for some large constant $C$ which may vary
from line to line and depend on various parameters,  and similarly
employ $A\sim B$ to denote the statement that $A\lesssim B\lesssim
A$. We also use $A\ll B$ to denote the statement $A\leq C^{-1} B$.
If a constant $C$ depends on a special parameter other than the
above, we shall write it explicitly by subscripts. For instance,
$C_\epsilon$ should be understood as a positive constant not only
depending on $p, q, n$ and $S$, but also on $\epsilon$. Throughout
this paper, pairs of conjugate indices are written as $p, p'$, where
$\frac{1}p+\frac1{p'}=1$ with $1\leq p\leq\infty$. Let $R>0$ be a dyadic number, we define the dyadic annulus in $\R^n$ by
$$A_{R}:=\big\{~x\in\R^n: \;\; R/2\leq|x|\leq R~\big\},\quad
S_{R}:=[R/2, R].$$ For each $M\in 2^{\Z}$, we define ${\mathbb L}_M$ to be the class of
Schwartz functions supported on a dyadic subset of the paraboloid in
the form of
\begin{equation}\label{dya}
\big\{(\tau,\xi)\in\R\times\R^n: M\leq|\xi|\leq2M, \tau=|\xi|^2\big\}.\end{equation}
\vspace{0.12cm}
\subsection{Spherical harmonics expansions and Bessel function}
We recall an expansion formula with respect to the
spherical harmonics. Let
\begin{equation}\label{2.2}
\xi=\rho \omega \quad\text{and}\quad x=r\theta\quad\text{with}\quad
\omega,\theta\in\mathbb{S}^{n-1}.
\end{equation}
For every $g\in L^2(\R^n)$, we have the expansion formula
\begin{equation*}
g(
\xi)=\sum_{k=0}^{\infty}\sum_{\ell=1}^{d(k)}a_{k,\ell}(\rho)Y_{k,\ell}(\omega),
\end{equation*}
where
\begin{equation*}
\big\{Y_{k,1},\ldots, Y_{k,d(k)}\big\}
\end{equation*}
is the orthogonal basis of the spherical harmonics space of degree
$k$ on $\mathbb{S}^{n-1}$. This space is recorded by $\mathcal{H}^{k}$ and it has the
dimension
\begin{equation*}
d(k)=\frac{2k+n-2}{k}C^{k-1}_{n+k-3}\simeq \langle k\rangle^{n-2}.
\end{equation*}
It is clear that we have the orthogonal
decomposition of $L^2(\mathbb{S}^{n-1})$
\begin{equation*}
L^2(\mathbb{S}^{n-1})=\bigoplus_{k=0}^\infty \mathcal{H}^{k}.
\end{equation*}
It follows that
\begin{equation}\label{2.3}
\|g(\xi)\|_{L^2_\omega}=\|a_{k,\ell}(\rho)\|_{\ell^2_{k,\ell}}.
\end{equation}
Using the spherical harmonic expansion, as well as \cite{MNNO, Ster}, we define the action of $(1-\Delta_\omega)^{s/2}$ on  $g$ as follows
\begin{equation}\label{a2.3}
(1-\Delta_\omega)^{s/2} g=\sum_{k=0}^{\infty}\sum_{\ell=1}^{d(k)}(1+k(k+n-2))^{s/2}a_{k,\ell}(\rho)Y_{k,\ell}(\omega).
\end{equation}
Given $s,s'\geq0$ and $p,q\geq 1$, define
\begin{equation*}
\begin{split}
\|g\|_{H^{s,q}_{\rho}H^{s',p}_{\omega}}:=\big\|(1-\Delta)^{\frac
s2}\big((1-\Delta_\omega)^{\frac
{s'}2}g\big)\big\|_{L^{q}_{\mu(\rho)}(\R^+;L^{p}_{\omega}(\mathbb{S}^{n-1}))},
\end{split}
\end{equation*}
where $\mu(\rho)=\rho^{n-1}d\rho$.\vspace{0.2cm}

For our purpose, we need the inverse Fourier transform of
$a_{k,\ell}(\rho)Y_{k,\ell}(\omega)$. We recall the Bochner-Hecke formula, see\cite{Howe} and \cite[Theorem 3.10]{SW}
\begin{equation}\label{Fourier}
\check{g}(r\theta)=\sum_{k=0}^{\infty}\sum_{\ell=1}^{d(k)}2\pi
i^{k}Y_{k,\ell}(\theta)r^{-\frac{n-2}2}\int_0^\infty
J_{\nu(k)}(2\pi r\rho)a_{k,\ell}(\rho)\rho^{\frac
n2}d\rho.
\end{equation}
Here $\nu(k)=k+\frac{n-2}2$ and the Bessel function $J_{\nu}(r)$ of order $\nu$ is defined by
\begin{equation*}
J_{\nu}(r)=\frac{(r/2)^\nu}{\Gamma(\nu+\frac12)\Gamma(1/2)}\int_{-1}^{1}e^{isr}(1-s^2)^{(2\nu-1)/2}\mathrm{d
}s,
\end{equation*} where $\nu>-1/2$ and $r>0$.
It is easy to verify that there exists a constant $C$ independent of $\nu$ such that
\begin{equation}\label{rf}
|J_\nu(r)|\leq
\frac{Cr^\nu}{2^\nu\Gamma(\nu+\frac12)\Gamma(1/2)}\Big(1+\frac1{\nu+1/2}\Big).
\end{equation}
To investigate a behavior of asymptotic bound on $\nu$ and $r$, we
recall  the Schl\"afli integral representation \cite{Watson} of the
Bessel function: for $r\in\R^+$ and $\nu>-\frac12$
\begin{equation}\label{bessel}
\begin{split}
J_\nu(r)&=\frac1{2\pi}\int_{-\pi}^\pi
e^{ir\sin\theta-i\nu\theta}d\theta-\frac{\sin(\nu\pi)}{\pi}\int_0^\infty
e^{-(r\sinh s+\nu s)}ds\\&=:\tilde{J}_\nu(r)-E_\nu(r).
\end{split}
\end{equation}
Clearly, $E_\nu(r)=0$ when $\nu\in\Z^+$.  An easy computation shows that
\begin{equation}\label{bessele}
|E_\nu(r)|=\Big|\frac{\sin(\nu\pi)}{\pi}\int_0^\infty e^{-(r\sinh
s+\nu s)}ds\Big|\leq C (r+\nu)^{-1}.
\end{equation}
There is a number of references for the asymptotic behavior of a Bessel function, see e.g. \cite{Stein1,Stempak, Watson, CL}.
We recall some properties of a Bessel
function for a convenience.
\begin{lemma}[Asymptotics of Bessel functions] \label{Bessel} Let $\nu\gg1$ and let $J_\nu(r)$ be
the Bessel function of order $\nu$ defined as above. Then there exists
a large constant $C$ and small constant $c$ independent of $\nu$ and
$r$ such that:

\begin{itemize}
\item  When $r\leq \frac \nu2$, we have
\begin{equation}\label{b1}
|J_\nu(r)|\leq C e^{-c(\nu+r)};
\end{equation}

\item  When $\frac \nu 2\leq r\leq 2\nu$, we have
\begin{equation}\label{b2}
 |J_\nu(r)|\leq C
\nu^{-\frac13}(\nu^{-\frac13}|r-\nu|+1)^{-\frac14};
\end{equation}

\item  When $r\geq 2\nu$, we have
\begin{equation}\label{b3}
 J_\nu(r)=r^{-\frac12}\sum_{\pm}a_\pm(\nu,r) e^{\pm ir}+E(\nu,r),
\end{equation}
\end{itemize}
where $|a_\pm(\nu,r)|\leq C$ and $|E(\nu,r)|\leq Cr^{-1}$.
\end{lemma}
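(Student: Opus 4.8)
The plan is to prove the three asymptotic regimes by analyzing the two pieces $\tilde J_\nu(r)$ and $E_\nu(r)$ in the Schl\"afli representation \eqref{bessel} separately, with the $E_\nu$-term already handled by \eqref{bessele}. For the main oscillatory term $\tilde J_\nu(r)=\frac1{2\pi}\int_{-\pi}^{\pi}e^{i\phi(\theta)}\,\mathrm{d}\theta$ with phase $\phi(\theta)=r\sin\theta-\nu\theta$, everything comes down to a stationary phase analysis, where the location and nature of the critical points depend on the ratio $r/\nu$. We have $\phi'(\theta)=r\cos\theta-\nu$, so critical points solve $\cos\theta=\nu/r$; this equation has no real solution when $r<\nu$, a degenerate (double) critical point at $\theta=0$ when $r=\nu$, and two nondegenerate critical points $\pm\theta_0$ with $\cos\theta_0=\nu/r$ when $r>\nu$.

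For the regime $r\le\nu/2$ (inequality \eqref{b1}), there is no real stationary point and the phase is non-oscillatory; I would shift the contour $\theta\mapsto\theta+i\beta$ for a suitably chosen $\beta>0$ (or rather combine with the representation \eqref{bessel} directly, exploiting that $E_\nu$ is already exponentially small and that $r\sinh s+\nu s\gtrsim\nu+r$ on the relevant range), to extract the exponential gain $e^{-c(\nu+r)}$. Concretely, a contour shift by a fixed imaginary amount turns $e^{i\phi}$ into something of size $e^{-r\sinh\beta\cos(\cdot)+\nu\beta-\cdots}$ and optimizing in $\beta$ (with $\cosh\beta\sim r/\nu$ bounded away from $1$) yields the claimed decay. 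For the transition regime $\nu/2\le r\le2\nu$ (inequality \eqref{b2}), the critical point is at or near $\theta=0$ and nearly degenerate, so the behavior is governed by an Airy-type integral: after Taylor expanding $\phi$ near $\theta=0$ one gets $\phi(\theta)\approx(r-\nu)\theta-\tfrac r6\theta^3$, and the standard van der Corput estimate for integrals with $|\phi''|$ or $|\phi'''|$ controlled gives the bound $\nu^{-1/3}$ when $|r-\nu|\lesssim\nu^{1/3}$ and the improved factor $(\nu^{-1/3}|r-\nu|+1)^{-1/4}$ once $|r-\nu|$ grows, by splitting the $\theta$-range into a piece near $0$ where the cubic dominates and a piece where $|\phi'|\gtrsim|r-\nu|$ and one integrates by parts. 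For $r\ge2\nu$ (formula \eqref{b3}), the two critical points $\pm\theta_0$ are nondegenerate and well-separated from the endpoints; classical stationary phase gives the leading term $r^{-1/2}\sum_\pm a_\pm(\nu,r)e^{\pm i r\psi}$ — and here one checks that at $r\ge 2\nu$ the phase value $r\sin\theta_0-\nu\theta_0$ is comparable to $r$ up to a bounded multiplicative correction, which I would absorb into $a_\pm$ (or note that the precise phase is $\sqrt{r^2-\nu^2}-\nu\arccos(\nu/r)$, which differs from $r$ by $O(\nu^2/r)=O(r)$ with bounded derivatives, again absorbable), with the remainder $O(r^{-1})$ coming both from the next term in the stationary phase expansion and from \eqref{bessele}.

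The main obstacle is the transition regime \eqref{b2}: getting the \emph{uniform} bound that simultaneously captures the $\nu^{-1/3}$ Airy scale at $r\approx\nu$ and the smooth matching to the $r^{-1/2}$ decay as $r\to2\nu$ requires carefully tracking how the cubic and linear parts of the phase compete as $|r-\nu|$ ranges over $[0,\nu]$, i.e. a genuine two-variable (not just one-variable) van der Corput / Airy estimate with constants independent of $\nu$. I would handle it by a dyadic decomposition in $|r-\nu|$ relative to $\nu^{1/3}$: for $|r-\nu|\lesssim\nu^{1/3}$ use the cubic van der Corput bound directly; for $\nu^{1/3}\lesssim|r-\nu|\lesssim\nu$ localize around the (now mildly separated) critical point of size $\sim(r/|r-\nu|)^{1/2}$-wide and estimate the contribution as $|r-\nu|^{-1/2}\sim\nu^{1/3}(\nu^{-1/3}|r-\nu|)^{-1/2}$, which is stronger than the stated $(\nu^{-1/3}|r-\nu|)^{-1/4}$ and hence suffices, while the non-stationary part is dealt with by repeated integration by parts. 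The endpoint-contribution terms from $\theta=\pm\pi$ are negligible throughout since $\phi'(\pm\pi)=-r-\nu$ is large.
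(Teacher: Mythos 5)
Your overall strategy --- stationary phase on the Schl\"afli integral $\tilde J_\nu(r)=\frac1{2\pi}\int_{-\pi}^{\pi}e^{i(r\sin\theta-\nu\theta)}\,\mathrm{d}\theta$, with the three regimes distinguished by the location and degeneracy of the critical points of $\phi(\theta)=r\sin\theta-\nu\theta$ --- is the same as the paper's, and your treatments of \eqref{b1} and \eqref{b3} are essentially fine. For \eqref{b1} the paper is more elementary: it bounds $|J_\nu(r)|\le C(r/2)^\nu/\Gamma(\nu+\tfrac12)$ directly from the Poisson integral representation and applies Stirling, with no contour shift; your steepest-descent route also works, but note that for $r\le\nu/2$ the complex saddle sits at $\theta=i\beta$ with $\cosh\beta=\nu/r\ge2$, not $r/\nu$ (which is less than $1$ and cannot equal a $\cosh$). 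For \eqref{b3} your argument coincides with the paper's, and since the lemma only asserts $|a_\pm|\le C$ with no derivative bounds, absorbing the true phase $\sqrt{r^2-\nu^2}-\nu\arccos(\nu/r)$ into $a_\pm$ is legitimate as you say.

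The genuine problem is in the transition regime \eqref{b2}, exactly the part you identified as the main obstacle. For $\nu^{1/3}\lesssim|r-\nu|\lesssim\nu$ you claim the stationary contribution is $\lesssim|r-\nu|^{-1/2}$ and assert that this is \emph{stronger} than the target $\nu^{-1/3}(\nu^{-1/3}|r-\nu|)^{-1/4}=\nu^{-1/4}|r-\nu|^{-1/4}$. The comparison goes the other way: $|r-\nu|^{-1/2}\le\nu^{-1/4}|r-\nu|^{-1/4}$ holds iff $|r-\nu|\ge\nu$, so throughout the range in question your bound is \emph{weaker} than what is needed (at $|r-\nu|=\nu^{1/3}$ it gives $\nu^{-1/6}$ versus the required $\nu^{-1/3}$), and the argument as written does not close. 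The error is in the size of the second derivative at the critical point: $\phi''(\theta_0)=-r\sin\theta_0=-\sqrt{r^2-\nu^2}\sim(\nu|r-\nu|)^{1/2}$, not $|r-\nu|$, so the correct van der Corput/stationary-phase contribution is $|\phi''(\theta_0)|^{-1/2}\sim(r^2-\nu^2)^{-1/4}\sim\nu^{-1/4}|r-\nu|^{-1/4}$, which is precisely the target. With that correction your dyadic scheme works and matches the paper's proof, which obtains the same bound by cutting out a $\delta$-neighbourhood of $\pm\theta_0$, integrating by parts on the complement, and optimizing $\delta=c(r^2-\nu^2)^{-1/4}$.
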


\section{Proof of Theorem \ref{thm}}
In this section, we prove Theorem \ref{thm} by using some localized linear estimates whose proof are postpone to the next section. Since inequality \eqref{q0} is a special case of \eqref{main}, we aim to prove \eqref{main}.
Since \eqref{main} is a direct consequence of
the Stein-Tomas inequality \cite{Stein1} for the case  $p\leq2$, it suffices to prove \eqref{main} for  the case  $p\geq2$.
 More precisely, we
will only establish the estimate for $q>{2(n+1)}/{n}$, $(n+2)/q={n}/{p'}$ with $p\geq
2$
\begin{equation}\label{aim}
\|(fd\sigma)^{\vee}\|_{L^q_{t,x}(\R\times\R^n)}\leq
C_{p,q,n,S}\|(1+|\Omega|\big)^{s}f\|_{L^p(S;d\sigma)}.
\end{equation}
Recall the notation $\mathbb{L}_M$ and $A_R$ in the subsection 2.1. We decompose $f$ into a sum of dyadic supported
functions
\begin{equation*}
\begin{split}
f=\sum_{M}f_M,
\end{split}
\end{equation*}
where $f_M=f\chi_{\{(\tau,\xi):\tau=|\xi|^2, M\leq|\xi|\leq2M\}}\in
{\mathbb L}_{M}$. It follows that
\begin{equation}\label{decom}
\begin{split}
\|(fd\sigma)^{\vee}\|_{L^q_{t,x}(\R\times\R^n)}
=&\bigg\|\sum_{M}(f_Md\sigma)^{\vee}\bigg\|_{L^q_{t,x}(\R\times\R^n)}\\
=&\bigg(\sum_{R}\Big\|\sum_{M}(f_Md\sigma)^{\vee}\Big\|^q_{L^q_{t,x}(\R\times A_R)}\bigg)^{\frac1q}\\
\lesssim&
\bigg(\sum_{R}\Big(\sum_{M}\left\|(f_Md\sigma)^{\vee}\right\|_{L^q_{t,x}(\R\times A_R)}\Big)^q\bigg)^{\frac1q}.
\end{split}
\end{equation}
To prove \eqref{aim}, we need localized linear restriction estimates.
\begin{proposition}\label{linear estimates}
Assume $f\in {\mathbb L}_1$ and $R>0$ is a dyadic number.  Then the
following linear restriction estimates hold true.\vspace{0.1cm}

$\bullet$ Let  $q=2$, then
\begin{equation}\label{lr1}
\|(fd\sigma)^\vee\|_{L^2_{t,x}(\R\times
A_R)}\lesssim\min\left\{R^\frac12, R^{\frac
n2}\right\}\|f\|_{L^2(S;d\sigma)}.
\end{equation}

$\bullet$ Let  $q=3p'$ with $2\leq p\leq4$ and $\sigma=(n-2)(\frac12-\frac1q)+\frac2q$, $0<\epsilon\ll1$, then
\begin{equation}\label{lr2}
\|(fd\sigma)^\vee\|_{L^q_{t,x}(\R\times
A_R)}\lesssim\min\left\{R^{(n-1)(\frac1q-\frac12)+\epsilon}, R^{\frac
nq}\right\}\left\|\big(1+|\Omega|\big)^{\sigma}f\right\|_{L^p(S;d\sigma)}.
\end{equation}
\end{proposition}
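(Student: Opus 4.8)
The plan is to establish the two estimates in Proposition~\ref{linear estimates} by expanding $f$ in spherical harmonics and reducing to bounds on the Hankel-type operators appearing in \eqref{Fourier}, then splitting the radial integration according to the three regimes of Lemma~\ref{Bessel}. Write $f(|\xi|^2,\xi)=\sum_{k,\ell}a_{k,\ell}(\rho)Y_{k,\ell}(\omega)$ with $\rho\in[1,2]$ (since $f\in\mathbb{L}_1$), so that by \eqref{Fourier} and the change of variables $\tau=\rho^2$,
\begin{equation*}
(f\mathrm{d}\sigma)^\vee(r\theta,t)=\sum_{k,\ell}c_k\,Y_{k,\ell}(\theta)\,r^{-\frac{n-2}2}\int_0^\infty e^{2\pi i t\rho^2}J_{\nu(k)}(2\pi r\rho)a_{k,\ell}(\rho)\rho^{\frac n2}\,\mathrm{d}\rho,
\end{equation*}
with $|c_k|\le C$ and $\nu(k)=k+\frac{n-2}2$. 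For $q=2$ we use Plancherel on the sphere so that the square function collapses to $\ell^2_{k,\ell}$, and then the worst case is a single mode; here $R^{1/2}$ comes from the oscillatory $t$-integral (a standard $TT^*$/van der Corput estimate using the curvature of $e^{2\pi i t\rho^2}$ together with the $r^{-1/2}$ decay of $J_\nu$ in the regime $r\gtrsim\nu$, which is the relevant range once $R\gtrsim k$), while $R^{n/2}$ is the trivial bound coming from $\|(f\mathrm{d}\sigma)^\vee\|_{L^\infty}\lesssim\|f\|_{L^1}$ on $A_R$ of measure $\sim R^n$, interpolated against $L^2$. The modes with $\nu(k)\gg R$ contribute negligibly by \eqref{b1}.

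For the $q=3p'$ estimate the strategy is to exploit the special structure $q=3p'$, equivalently $L^{q}$ with a bilinear/cubic $TT^*$ identity: $\|(f\mathrm{d}\sigma)^\vee\|_{L^{3p'}}^3$ can be written in terms of a three-fold convolution of the measure, which is precisely the mechanism behind Shao's bilinear argument in the radial case. After the spherical-harmonic reduction, one is left with estimating, for each $k$, an operator $H_k$ acting on the radial profile $a_{k,\ell}$, and then summing in $\ell$ via $L^2_{\mathrm{sph}}$ orthogonality, paying the factor $d(k)^{1/2-1/q}\sim\langle k\rangle^{(n-2)(1/2-1/q)}$ in passing from $L^q_\omega$ to $L^2_\omega$; together with the operator-norm growth in $k$ this should produce exactly the weight $(1+|\Omega|)^{(n-1)(1/2-1/q)}$. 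The operator bound for $H_k$ is obtained by inserting the asymptotics of Lemma~\ref{Bessel}: in the region $r\ge 2\nu(k)$ one substitutes $J_{\nu}(2\pi r\rho)=(r\rho)^{-1/2}\sum_\pm a_\pm e^{\pm 2\pi i r\rho}+O((r\rho)^{-1})$, turning the $\rho$-integral into $\int e^{2\pi i(t\rho^2\pm r\rho)}(\cdots)\,\mathrm{d}\rho$, an oscillatory integral with nondegenerate phase (curvature from the $\rho^2$ term) amenable to the $TT^*$ argument exactly as on the paraboloid; in the transition region $\nu/2\le r\le 2\nu$ one uses the uniform bound \eqref{b2}, and in $r\le\nu/2$ the exponential decay \eqref{b1}. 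The two bounds $R^{(n-1)(1/q-1/2)+\epsilon}$ and $R^{n/q}$ again arise respectively from the oscillatory estimate in the large-$r$ regime (the $\epsilon$ absorbing the logarithmic losses from dyadic summation over $k\lesssim R$ and the transition region) and from the trivial $L^\infty\to L^\infty$ bound on $A_R$.

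The main obstacle I expect is the transition regime $\nu(k)\lesssim r\lesssim \nu(k)^2$, i.e.\ $k\lesssim R\lesssim k^2$: there $J_\nu(2\pi r\rho)$ cannot be written cleanly as an oscillatory exponential times $r^{-1/2}$, and the decay \eqref{b2} is only $\nu^{-1/3}$, which is weaker than the $r^{-1/2}$ one would want. The resolution is to give up trying to capture oscillation there and instead absorb the deficit into angular regularity: using \eqref{b2} directly one estimates $H_k$ by its trivial size, and the resulting loss in $k$ is bounded by $\langle k\rangle^{(n-1)(1/2-1/q)}$ — precisely the claimed weight, which is the ``gap between $L^4_{\mathrm{sph}}$ and $L^2_{\mathrm{sph}}$'' referred to in the introduction. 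One must also be careful that the cubic $TT^*$ expansion interacts correctly with the spherical-harmonic decomposition (the product $Y_{k_1}Y_{k_2}\overline{Y_{k_3}}$ is not a single harmonic), which is handled by Young/Schur-type estimates on the resulting kernel in $k$; and the endpoint $\epsilon$-loss in $R$ is unavoidable from summing the $O(R)$ many modes but is harmless since it will be interpolated away in Section~3.
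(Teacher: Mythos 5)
Your outline reproduces the paper's skeleton in several places (spherical harmonic expansion, the trichotomy $\nu(k)\gg R$, $\nu(k)\sim R$, $\nu(k)\ll R$, exponential decay for large $k$, and an oscillatory/kernel argument for $r\gg\nu$), but your proposed resolution of what you yourself call the main obstacle does not close. In the turning-point regime $\nu(k)\sim R$ the claimed estimate, after moving the weight to the left, amounts to $\|(f_{\Omega_2}\mathrm{d}\sigma)^\vee\|_{L^4_{t,x}(\R\times A_R)}\lesssim\|g\|_{L^4}$ with \emph{no} decay in $R$ --- essentially a Stein--Tomas bound. If you discard the oscillation and use only the size bound \eqref{b2} together with Hausdorff--Young in $t$ (the only tool left once oscillation is abandoned), the best you can get is, for $q=4$,
\begin{equation*}
\big\|r^{-\frac{n-2}2}\big\|_{L^4_{\mu(r)}(S_R)}\cdot R^{\frac{n-1}4}\cdot\sup_{r\sim R}\|J_{\nu}(r\cdot)\|_{L^4_\rho([1,2])}\lesssim R^{1-\frac n4}\cdot R^{\frac{n-1}4}\cdot R^{-\frac12+\epsilon}= R^{\frac14+\epsilon},
\end{equation*}
i.e.\ a loss of $R^{1/4}$ over the target (a similar computation loses $R^{1/3}$ at $q=6$). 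The deficit is \emph{not} absorbed by the weight $\langle k\rangle^{(n-1)(1/2-1/q)}$: that weight is already spent converting $L^4_\omega$ to $L^2_\omega$, and Hausdorff--Young in $t$ simply cannot see the dispersion of $e^{it\rho^2}$ that the target estimate requires. The paper handles $\Omega_2$ by a completely different mechanism: it applies the global Strichartz/Stein--Tomas estimate \eqref{str} (valid since $4,6\ge 2(n+2)/n$) to the whole block of modes at once, obtaining $\|(f_{\Omega_2}\mathrm{d}\sigma)^\vee\|_{L^q(\R^{1+n})}\lesssim\|g\|_{L^2}$ with no $R$-loss, and only then uses $k\sim R$ to trade $\|g\|_{L^2_\omega}$ for $R^{-(n-1)(1/2-1/q)}\|g\|_{H^{(n-1)(1/2-1/q)}_\omega}$. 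Without this (or an equivalent) input your argument fails by a power of $R$. Relatedly, for $\nu\ll R$ the paper does not use the crude phase $e^{\pm ir\rho}$ but the exact Barcel\'o--C\'ordoba phase $\theta(r\rho)=((r\rho)^2-\nu^2)^{1/2}-\nu\arccos(\nu/(r\rho))-\pi/4$, whose first derivative on the resonant set $\rho_1^2-\rho_2^2+\rho_3^2-\rho_4^2=0$ is bounded below by $|\rho_1^2-\rho_2^2||\rho_3^2-\rho_2^2|$; this uniform-in-$\nu$ control is what makes the $L^4$ kernel estimate work for all $\nu\ll R$, including $R\le\nu^2$, so there is no separate ``$\nu\lesssim r\lesssim\nu^2$'' case to sacrifice.

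Two further structural remarks. First, your ``cubic $TT^*$ / three-fold convolution'' identity for general $q=3p'$ is only available when $q$ is an even integer; the paper instead proves the two endpoints $(q,p)=(4,4)$ and $(6,2)$ (Proposition \ref{Local}) and obtains the whole segment $q=3p'$, $2\le p\le 4$, by interpolation, with the $R\lesssim1$ case and the $R^{n/q}$ alternative supplied separately by Lemma \ref{lem1}. Second, your $q=2$ argument (Plancherel in $t$ after $\rho^2\rightsquigarrow\rho$ plus $L^2$ bounds on Bessel functions) is a legitimate alternative, but note the paper's $R^{1/2}$ bound comes from an entirely different and shorter route: freezing $x_n$ over $|x_n|\lesssim R$ and applying Plancherel in $(t,x')$ after the change of variables $\eta_n=|\xi|^2$, with no spherical harmonics at all.
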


We postpone the proof of Proposition \ref{linear estimates} to the next section, and we complete the proof of Theorem \ref{thm} by this proposition.
By a scaling argument, we conclude from \eqref{lr1} that
\begin{equation*}
\|(f_Md\sigma)^\vee\|_{L^2_{t,x}(\R\times
A_R)}\lesssim\min\left\{(RM)^\frac12, (RM)^{\frac
n2}\right\}M^{n-\frac{n+2}2-\frac n2}\|f_M\|_{L^2(S;d\sigma)}.
\end{equation*}
For any $(q,p)$ satisfying
 $$q>{2(n+1)}/{n},\;\; (n+2)/q={n}/{p'} \;\;\;\text{with}\;\; \;p\geq
2, $$
 let $\alpha=2-\frac3q-\frac1p$, then we choose ${\bar{q}}=3{\bar{p}}'$ such that
$$\frac1q=\frac{1-\alpha}2+\frac{\alpha}{\bar{q}},\qquad \frac1p=\frac{1-\alpha}2+\frac{\alpha}{\bar{p}}. $$

From \eqref{lr2}, we have that for $\bar{q}=3\bar{p}'$ with $2\leq \bar{p}\leq 4$ and $\bar{\sigma}=(n-2)(\frac12-\frac1{\bar{q}})+\frac2{\bar{q}}$
\begin{equation*}
\begin{split}
&\|(f_Md\sigma)^\vee\|_{L^{\bar q}_{t,x}(\R\times
A_R)}\\
\lesssim&\min\left\{(RM)^{(n-1)(\frac1{\bar{q}}-\frac12)+\bar\epsilon},
(RM)^{\frac n{\bar{q}}}\right\}M^{n-\frac{n+2}{\bar{q}}-\frac
n{\bar{p}}}\left\|(1+|\Omega|\big)^{\bar{\sigma}}f_M\right\|_{L^{\bar{p}}(S;d\sigma)},
\end{split}
\end{equation*}
where $0<\bar{\epsilon}\ll1$.
Therefore we obtain by an interpolation theorem
\begin{equation}\label{Lq}
\begin{split}
&\|(f_Md\sigma)^\vee\|_{L^q_{t,x}(\R\times
A_R)}\\
\lesssim&\min\{(RM)^{\frac n q},
(RM)^{-\frac{n-1}2[1-\frac{2(n+1)}{qn}]+\epsilon}\}\left\|(1+|\Omega|\big)^{\sigma}f_M\right\|_{L^{p}(S;d\sigma)}.
\end{split}
\end{equation}
Here $0<\epsilon:=\bar{\epsilon}\alpha\ll1$.
According to \eqref{decom}, we obtain
\begin{equation*}
\begin{split}
&\|(fd\sigma)^{\vee}\|_{L^q_{t,x}(\R\times\R^n)}
\\
\lesssim&
\left(\sum_{R}\left(\sum_{M}\min\left\{(RM)^{\frac n q},
(RM)^{-\frac{n-1}2[1-\frac{2(n+1)}{qn}]+\epsilon}\right\}\|(1+|\Omega|\big)^{\sigma}f_M\|_{L^{p}(S;d\sigma)}
\right)^q\right)^{\frac1q}.
\end{split}
\end{equation*}
Since $q>{2(n+1)}/n$, $\epsilon\ll1$, and $R, M$ are both dyadic number, we have
\begin{equation*}
\begin{split}
&\sup_{R>0}\bigg(\sum_M \min\Big\{(RM)^{\frac n q},
(RM)^{-\frac{n-1}2[1-\frac{2(n+1)}{qn}]+\epsilon}\Big\}\bigg)<\infty,\\
&\sup_{M>0}\bigg(\sum_R \min\Big\{(RM)^{\frac n q},
(RM)^{-\frac{n-1}2[1-\frac{2(n+1)}{qn}]+\epsilon}\Big\}\bigg)<\infty.
\end{split}
\end{equation*}
Note that for $q>{2(n+1)}/n>p\geq2$, we have by the Schur lemma  and embedding inequality
\begin{equation*}
\begin{split}
\|(fd\sigma)^{\vee}\|_{L^q_{t,x}(\R\times\R^n)}
&\lesssim
\bigg(\sum_{M}\|(1+|\Omega|\big)^{\sigma}f_M\|^p_{L^{p}(S;d\sigma)}\bigg)^{\frac1p}\\&
=\left\|(1+|\Omega|\big)^{\sigma}f\right\|_{L^{p}(S;d\sigma)}.
\end{split}
\end{equation*}
Choosing  $q=q_0=\left(2(n+1)/n \right)_+$ and
$(n+2)/q_0={n}/{p_0'}$, we  have
\begin{equation*}
\begin{split}
\|(fd\sigma)^{\vee}\|_{L^{q_0}_{t,x}(\R\times\R^n)}
\lesssim \left\|(1+|\Omega|\big)^{\sigma_0}f\right\|_{L^{p_0}(S;d\sigma)}.
\end{split}
\end{equation*}
This implies \eqref{q0}.
Interpolating this inequality with the restriction estimate  by Bourgain-Guth\cite[Theorem 1]{BG}, we  prove \eqref{aim}. Hence, the proof of estimate \eqref{main} is completed.
\vspace{0.2cm}

\section{Localized restriction estimate}
In this section we prove  Proposition \ref{linear estimates}. We start our proof by recalling
\begin{equation}\label{Fouri}
\begin{split}
&(f(\tau,\xi)d\sigma)^\vee(t,x) =\int_{\R^{n}}
g(\xi)e^{2\pi i(x\cdot \xi+t|\xi|^2)}d\xi,
\end{split}
\end{equation}
where $g(\xi)=f(|\xi|^2, \xi)\in \mathcal{S}(\R^n)$
with  $\text{supp}~g\subset\{\xi:|\xi|\in [1,2]\}$. We apply the
spherical harmonic expansion to $g$ to obtain $$
g(
\xi)=\sum_{k=0}^{\infty}\sum_{\ell=1}^{d(k)}a_{k,\ell}(\rho)Y_{k,\ell}(\omega).$$
 Recalling  $\nu(k)=k+(n-2)/2$, we have by \eqref{Fourier}
\begin{equation}\label{Fouri1}
\begin{split}
&(fd\sigma)^\vee(t,x)
=2\pi r^{-\frac{n-2}2}\sum_{k=0}^{\infty}\sum_{\ell=1}^{d(k)}
i^{k}Y_{k,\ell}(\theta) \int_0^\infty e^{-2\pi it\rho^2}
J_{\nu(k)}(2\pi r\rho)a_{k,\ell}(\rho)\rho^{\frac
n2}\varphi(\rho)d\rho.
\end{split}
\end{equation}
Here we insert a harmless smooth bump function $\varphi$ supported on the interval
$(1/2, 4)$ into the above integral, since $a_{k,\ell}(\rho)$ is
supported on $[1,2]$. Now we estimate the quantity $\|(fd\sigma)^\vee\|_{L^q_{t,x}(\R\times
A_R)}$. To this end, we first prove the following lemma.

\begin{lemma}\label{lem} Let $\mu(r)=r^{n-1}dr$ and $\omega(k)$ be a weight specified below. For $q\geq2$, we have
\begin{equation}\label{a3.7}
\begin{split}
&\bigg\|r^{-\frac{n-2}2}\Big(\sum_{k=0}^\infty
\sum_{\ell=1}^{d(k)}\omega(k)\big|\int_0^\infty e^{it\rho^2}
J_{\nu(k)}( r\rho)a_{k,\ell}(\rho)\varphi(\rho)\rho^{
\frac{n-2}2}\rho d\rho\big|^2\Big)^{\frac12}
\bigg\|_{L^q_t(\R;L^q_{\mu(r)}(S_R))}\\
\lesssim & \bigg\|r^{-\frac{n-2}2}\Big(\sum_{k=0}^\infty
\sum_{\ell=1}^{d(k)}\omega(k)\big\|J_{\nu(k)}(
r\rho)a_{k,\ell}(\rho)\varphi(\rho)\rho^{
\frac{n-2}2+\frac1{q'}}\big\|^2_{L^{q'}_\rho}\Big)^{\frac12}
\bigg\|_{L^q_{\mu(r)}(S_R)}.
\end{split}
\end{equation}
\end{lemma}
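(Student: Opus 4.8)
The plan is to dispose of the $t$-integration first, for each fixed radius $r$, performing the radial integration only at the very end; the core of the argument is a Hausdorff--Young estimate obtained after straightening the quadratic phase $e^{it\rho^2}$ via the substitution $s=\rho^2$. Since the two outer Lebesgue exponents in $L^q_t\big(\R;L^q_{\mu(r)}(S_R)\big)$ both equal $q$, Fubini's theorem identifies this norm with $L^q_{\mu(r)}\big(S_R;L^q_t(\R)\big)$. Writing
\[
I_{k,\ell}(t,r):=\int_0^\infty e^{it\rho^2}\,J_{\nu(k)}(r\rho)\,a_{k,\ell}(\rho)\,\varphi(\rho)\,\rho^{\frac{n-2}2}\,\rho\,\mathrm d\rho,
\]
it suffices to establish, for each fixed $r$, the pointwise-in-$r$ bound
\[
\Big\|\Big(\sum_{k,\ell}\omega(k)\,|I_{k,\ell}(t,r)|^2\Big)^{\frac12}\Big\|_{L^q_t}\ \lesssim\ \Big(\sum_{k,\ell}\omega(k)\,\big\|J_{\nu(k)}(r\rho)\,a_{k,\ell}(\rho)\,\varphi(\rho)\,\rho^{\frac{n-2}2+\frac1{q'}}\big\|_{L^{q'}_\rho}^2\Big)^{\frac12};
\]
multiplying by $r^{-\frac{n-2}2}$, raising to the power $q$, and integrating $\mathrm d\mu(r)$ over $S_R$ then yields \eqref{a3.7}.

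To prove the fixed-$r$ bound, first note that $q\ge2$ gives $q/2\ge1$, so the triangle inequality in $L^{q/2}_t$ (equivalently Minkowski's inequality $\|\cdot\|_{L^q_t(\ell^2)}\le\|\cdot\|_{\ell^2(L^q_t)}$) reduces matters to estimating each $\|I_{k,\ell}(\cdot,r)\|_{L^q_t}$ separately. For a fixed triple $(k,\ell,r)$, the substitution $s=\rho^2$ (so $\rho\,\mathrm d\rho=\tfrac12\,\mathrm ds$) writes $I_{k,\ell}(\cdot,r)$ as $\tfrac12$ times the Fourier transform, in the pairing $t\leftrightarrow s$, of the compactly supported function $H_{k,\ell}(s):=J_{\nu(k)}(r\sqrt s)\,a_{k,\ell}(\sqrt s)\,\varphi(\sqrt s)\,s^{\frac{n-2}4}\mathbf 1_{\{s>0\}}$. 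Since $q\ge2$, the Hausdorff--Young inequality gives $\|I_{k,\ell}(\cdot,r)\|_{L^q_t}\lesssim\|H_{k,\ell}\|_{L^{q'}_s}$, and undoing the substitution — the Jacobian $\mathrm ds=2\rho\,\mathrm d\rho$ contributing one extra power of $\rho$, i.e. a factor $\rho^{1/q'}$ inside the $L^{q'}$-norm — yields $\|H_{k,\ell}\|_{L^{q'}_s}=2^{1/q'}\big\|J_{\nu(k)}(r\rho)\,a_{k,\ell}(\rho)\,\varphi(\rho)\,\rho^{\frac{n-2}2+\frac1{q'}}\big\|_{L^{q'}_\rho}$. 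Combining the last two steps proves the fixed-$r$ bound, and hence the lemma.

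The one point requiring care is the order of operations. The exchange $L^q_t(\ell^2)\to\ell^2(L^q_t)$ and the per-index passage $L^q_t\to L^{q'}_\rho$ both use $q\ge2$, and they must be applied in this order, with the radial integration kept for last, so that the sum over $k,\ell$ stays \emph{outside} the inner $L^{q'}_\rho$-norm, exactly as in the statement; a direct appeal to a vector-valued Hausdorff--Young inequality would instead place the sum inside $L^{q'}_\rho$, which for $q'\le2$ is the larger quantity and hence too weak to match the stated right-hand side. Everything else is routine bookkeeping; the exponent $\tfrac{n-2}2+\tfrac1{q'}$ is precisely the power produced when the substitution $s=\rho^2$ absorbs one factor of $\rho$.
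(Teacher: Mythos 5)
Your proof is correct and follows essentially the same route as the paper: Fubini/Minkowski to pass from $L^q_t(\ell^2)$ to $\ell^2(L^q_t)$ for each fixed $r$, the substitution $s=\rho^2$ to turn the integral into a Fourier transform in $t$, Hausdorff--Young, and undoing the substitution to produce the extra factor $\rho^{1/q'}$. The bookkeeping, including the order-of-operations point you flag, matches the paper's argument exactly.
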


\begin{proof}Since $q\geq 2$, the Minkowski
inequality and the Fubini theorem show that the left hand side of
\eqref{a3.7} is bounded by
\begin{equation*}
\begin{split}
\bigg\|r^{-\frac{n-2}2}\Big(\sum_{k=0}^\infty\sum_{\ell=1}^{d(k)}\omega(k)\Big\|\int_0^\infty
e^{it\rho^2} J_{\nu(k)}( r\rho)a_{k,\ell}(\rho)\varphi(\rho)\rho^{\frac
{n-2}2}\rho~d\rho\Big\|^2_{L^q_t(\R)}\Big)^{\frac12}\bigg\|_{L^q_{\mu(r)}(S_R)}.
\end{split}
\end{equation*}
We rewrite this by making the variable change $\rho^2\rightsquigarrow\rho$
\begin{equation}\label{var}
\begin{split}
\bigg\|r^{-\frac{n-2}2}\Big(\sum_{k=0}^\infty\sum_{\ell=1}^{d(k)}\omega(k)\Big\|\int_0^\infty
e^{it\rho} J_{\nu(k)}(
r\sqrt{\rho})a_{k,\ell}(\sqrt{\rho})\varphi(\sqrt{\rho})\rho^{\frac
{n-2}4}~d\rho\Big\|^2_{L^q_t(\R)}\Big)^{\frac12}\bigg\|_{L^q_{\mu(r)}(S_R)}.
\end{split}
\end{equation}
We use the Hausdorff-Young inequality with respect to $t$ and we change
variables back to obtain
\begin{equation*}
\begin{split}
\text{LHS of }~\eqref{a3.7}\lesssim
\Big\|r^{-\frac{n-2}2}\Big(\sum_{k=0}^\infty\sum_{\ell=1}^{d(k)}\omega(k)
\big\|J_{\nu(k)}(
r\rho)a_{k,\ell}(\rho)\varphi(\rho)\rho^{(n-2)/2+1/q'}\big\|_{L^{q'}_\rho}^2\Big)^{\frac12}
\Big\|_{L^q_{\mu(r)}(S_R)}.
\end{split}
\end{equation*}
\end{proof}
Now we prove that the inequalities \eqref{lr1} and \eqref{lr2} with $ R\lesssim1$. For doing
this, we need
\begin{lemma}\label{lem1} Let $q\geq2$ and $R\lesssim1$, we have the following estimate
\begin{equation}\label{R<1}
\begin{split}
\|(f~d\sigma)^\vee\|_{L^q_{t,x}(\R\times
A_R)}\lesssim R^{\frac nq}
\bigg(\sum_{k=0}^\infty\sum_{\ell=1}^{d(k)}\omega(k)\big\|a_{k,\ell}(\rho)\varphi(\rho)\big\|_{L^{q'}_{\rho}}^2\bigg)^{\frac12},\end{split}
\end{equation}
where $\omega(k)=(1+k)^{2(n-1)(1/2-1/q)}$.

\end{lemma}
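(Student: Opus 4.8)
The plan is to feed the Hankel expansion \eqref{Fouri1} into Lemma \ref{lem} and then to exploit the hypothesis $R\lesssim1$: every Bessel argument $2\pi r\rho$ that occurs below has $r\in S_R$ and $\rho\in\mathrm{supp}\,\varphi\subset(1/2,4)$, hence is bounded, so by \eqref{rf} the quantity $J_{\nu(k)}(2\pi r\rho)$ decays faster than any power of $k$. This super-polynomial decay is what will make the sums over $\ell$ and over $k$ harmless. Writing $H_{k,\ell}(t,r):=\int_0^\infty e^{-2\pi it\rho^2}J_{\nu(k)}(2\pi r\rho)a_{k,\ell}(\rho)\varphi(\rho)\rho^{n/2}\,\mathrm{d}\rho$, formula \eqref{Fouri1} reads $(f\mathrm{d}\sigma)^\vee(t,r\theta)=2\pi\,r^{-(n-2)/2}\sum_{k,\ell}i^{k}Y_{k,\ell}(\theta)H_{k,\ell}(t,r)$.

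The first step is an angular reduction. For fixed $(t,r)$ and $q\ge2$ I would bound the $L^{q}_{\theta}$-norm by the triangle inequality in $k$, then by the addition formula $\sum_{\ell}|Y_{k,\ell}(\theta)|^{2}=d(k)/|\mathbb{S}^{n-1}|\lesssim(1+k)^{n-2}$ combined with Cauchy--Schwarz in $\ell$, and finally by Cauchy--Schwarz in $k$ against the weight $\tilde\omega(k):=(1+k)^{n-1+\epsilon}$ (for which $\sum_{k}(1+k)^{n-2}/\tilde\omega(k)<\infty$), obtaining
\[
\big\|(f\mathrm{d}\sigma)^\vee(t,r\cdot)\big\|_{L^{q}_{\theta}(\mathbb{S}^{n-1})}\ \lesssim\ r^{-(n-2)/2}\Big(\sum_{k,\ell}\tilde\omega(k)\,|H_{k,\ell}(t,r)|^{2}\Big)^{1/2}.
\]
Raising this to the $q$-th power, integrating against $r^{n-1}\mathrm{d}r\,\mathrm{d}t$, taking $q$-th roots, and invoking Lemma \ref{lem} with the weight $\tilde\omega$ (after the harmless rescaling $(t,r)\mapsto(2\pi t,2\pi r)$ that absorbs the $2\pi$'s) then gives
\[
\|(f\mathrm{d}\sigma)^\vee\|_{L^{q}_{t,x}(\R\times A_{R})}\ \lesssim\ \Big\|\,r^{-(n-2)/2}\Big(\sum_{k,\ell}\tilde\omega(k)\big\|J_{\nu(k)}(2\pi r\rho)a_{k,\ell}(\rho)\varphi(\rho)\rho^{(n-2)/2+1/q'}\big\|_{L^{q'}_{\rho}}^{2}\Big)^{1/2}\,\Big\|_{L^{q}_{\mu(r)}(S_{R})}.
\]

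The second step bounds the right-hand side using $R\lesssim1$. Since $\rho^{(n-2)/2+1/q'}\sim1$ on $\mathrm{supp}(a_{k,\ell}\varphi)\subset(1/2,4)$, the inner $L^{q'}_{\rho}$-norm is at most a constant times $\big(\sup_{\rho\in(1/2,4)}|J_{\nu(k)}(2\pi r\rho)|\big)\|a_{k,\ell}\varphi\|_{L^{q'}_{\rho}}$; and for $r\in S_{R}$ with $R\lesssim1$ one has $2\pi r\rho\lesssim1$, so \eqref{rf} and $\nu(k)=k+\tfrac{n-2}{2}$ yield the bundled estimate
\[
r^{-(n-2)/2}\sup_{\rho\in(1/2,4)}\big|J_{\nu(k)}(2\pi r\rho)\big|\ \lesssim_{n}\ \frac{(CR)^{k}}{\Gamma(k+\tfrac{n-1}{2})}\ =:\ \tilde c_{k}(R),
\]
$C$ an absolute constant, where Stirling's formula gives $\sum_{k}(1+k)^{M}\tilde c_{k}(R)^{2}\lesssim_{M,n}1$ uniformly in $R\lesssim1$ for every fixed $M$; in particular $\tilde\omega(k)\tilde c_{k}(R)^{2}\lesssim_{n}1\le\omega(k)$ for all $k$. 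It is important here to keep $r^{-(n-2)/2}$ tied to the Bessel factor, since only then is the surviving power of $r$ equal to $r^{\nu(k)-(n-2)/2}=r^{k}$, with no negative power of $r$ left over. Substituting these bounds makes the integrand above independent of $r$, and pulling out $\big(\sum_{k,\ell}\omega(k)\|a_{k,\ell}\varphi\|_{L^{q'}_{\rho}}^{2}\big)^{1/2}$ leaves $\|1\|_{L^{q}_{\mu(r)}(S_{R})}=\big(\int_{R/2}^{R}r^{n-1}\mathrm{d}r\big)^{1/q}\sim R^{n/q}$, which is exactly \eqref{R<1}.

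The one place requiring care --- and the only place $R\lesssim1$ is genuinely used --- is the tension between the angular summation and the weight. The pointwise $\ell^{2}$-bound on $\{Y_{k,\ell}\}$ costs $(1+k)^{(n-2)/2}$, and turning the resulting $\ell^{1}_{k}$-sum into an $\ell^{2}_{k}$-sum costs a further $(1+k)^{1/2+}$; this is more than the target weight $\omega(k)=(1+k)^{2(n-1)(1/2-1/q)}$ can absorb near $q=2$, where $\omega(k)$ degenerates to $1$. The reason the lemma holds anyway is precisely that, at a bounded Bessel argument, $J_{\nu(k)}(2\pi r\rho)$ decays faster than any polynomial in $k$, so $\tilde c_{k}(R)$ swamps all of these polynomial losses and every sum over $k$ converges with room to spare; the exact form of $\omega(k)$ plays no role beyond $\omega(k)\ge1$.
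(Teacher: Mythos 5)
Your proof is correct, and its skeleton is the same as the paper's: reduce via the Hankel expansion \eqref{Fouri1} to a weighted $\ell^2_{k,\ell}$ expression, apply Lemma \ref{lem}, and then use the crude bound \eqref{rf} together with Stirling's formula, keeping $r^{-(n-2)/2}$ attached to the Bessel factor so that the surviving power of $r$ is $r^{k}\le R^{k}\lesssim 1$ and the $r$-integration yields $R^{n/q}$. The one step you do differently is the angular reduction. The paper passes from $L^q_\theta$ to the weighted square sum by the Sobolev embedding $H^{(n-1)(1/2-1/q)}(\mathbb{S}^{n-1})\hookrightarrow L^q(\mathbb{S}^{n-1})$ combined with the orthogonality relation \eqref{2.3}, which produces exactly the weight $\omega(k)=(1+k)^{2(n-1)(1/2-1/q)}$ of \eqref{R<1}. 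You instead use the addition formula $\sum_\ell|Y_{k,\ell}(\theta)|^2\lesssim(1+k)^{n-2}$ and two applications of Cauchy--Schwarz, which is more elementary but yields the larger weight $\tilde\omega(k)=(1+k)^{n-1+\epsilon}$; you correctly recognize that this overshoot is harmless in the regime $R\lesssim1$, since the factor $(CR)^{k}/\Gamma(k+\tfrac{n-1}{2})$ decays super-polynomially in $k$ and absorbs the surplus $\tilde\omega(k)/\omega(k)$, while $\omega(k)\ge1$ lets you restore the stated weight at the end. The trade-off is that the paper's Sobolev-embedding step is the one that survives when $R\gg1$ (it reappears in the proofs of \eqref{R<k} and \eqref{Rk}, where no super-polynomial Bessel decay is available and the sharp weight matters), whereas your cruder angular bound is specific to the bounded-argument regime; for the present lemma both arguments are complete and give \eqref{R<1}.
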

We postpone the proof of this lemma for a moment. Note that for $q'\leq 2\leq p$,  we use \eqref{R<1}, \eqref{a2.3}, the Minkowski inequality and the H\"older inequality
to obtain
\begin{equation*}
\begin{split}
\|(f~d\sigma)^\vee\|_{L^q_{t,x}(\R\times
A_R)}
\lesssim& R^{\frac nq}
\bigg\|\Big(\sum_{k=0}^\infty\sum_{\ell=1}^{d(k)}\omega(k)\big|a_{k,\ell}(\rho)\big|^2\Big)^{\frac12}\varphi(\rho)\bigg\|_{L^{q'}_{\rho}}\\
\lesssim& R^{\frac nq}
\left\|g\right\|_{L^{q'}_{\rho}H_\omega^{m}(\mathbb{S}^{n-1})}\lesssim
R^{\frac nq}
\left\|g\right\|_{L^{p}_{\rho}H_\omega^{m,p}(\mathbb{S}^{n-1})},
\end{split}
\end{equation*}
where $m=(n-1)(\frac12-\frac1q)$. In particular, for $q=2$ and $4\leq q\leq6$, this proves \eqref{lr1} and \eqref{lr2} when $R\lesssim 1$.
Hence it suffices to consider the case $R\gg1$ once we prove Lemma
\ref{lem1}.
\begin{proof}[Proof of Lemma~\ref{lem1}]  By scaling argument in variables $t,x$ and \eqref{Fouri1}, we obtain
\begin{equation}
\begin{split}
&\|(f~d\sigma)^\vee\|_{L^q_{t,x}(\R\times
A_R)}\\
\lesssim& \bigg\| r^{-\frac{n-2}2}\sum_{k=0}^{\infty}\sum_{\ell=1}^{d(k)}
i^{k}Y_{k,\ell}(\theta) \int_0^\infty e^{- it\rho^2}
J_{\nu(k)}( r\rho)a_{k,\ell}(\rho)\rho^{\frac
n2}\varphi(\rho)~d\rho
\bigg\|_{L^q_{t,x}(\R\times
A_R)}.
\end{split}
\end{equation}
By Sobolev's embedding, \eqref{2.3} and \eqref{a2.3}, we have
\begin{equation*}
\begin{split}
&\|(f~d\sigma)^\vee\|_{L^q_{t,x}(\R\times A_R)}\\
\lesssim&
\bigg\|r^{-\frac{n-2}2}\Big(\sum_{k=0}^\infty
\sum_{\ell=1}^{d(k)}\omega(k)\Big|\int_0^\infty e^{it\rho^2}
J_{\nu(k)}( r\rho)a_{k,\ell}(\rho)\varphi(\rho)\rho^{
\frac{n-2}2}\rho~d\rho\Big|^2\Big)^{\frac12}
\bigg\|_{L^q_t(\R;L^q_{\mu(r)}(S_R))}.
\end{split}
\end{equation*}
By Lemma \ref{lem}, it is enough to show
\begin{equation*}
\begin{split}
&\Big\|r^{-\frac{n-2}2}\Big(\sum_{k=0}^\infty\sum_{\ell=1}^{d(k)}\omega(k)
\big\|J_{\nu(k)}(
r\rho)a_{k,\ell}(\rho)\varphi(\rho)\rho^{(n-2)/2+1/q'}\big\|_{L^{q'}_\rho}^2\Big)^{\frac12}
\Big\|_{L^q_{\mu(r)}(S_R)}\\
\lesssim& R^{\frac nq}
\bigg(\sum_{k=0}^\infty\sum_{\ell=1}^{d(k)}\omega(k)\big\|a_{k,\ell}(\rho)\varphi(\rho)\big\|_{L^{q'}_{\rho}}^2\bigg)^{\frac12}.
\end{split}
\end{equation*}
Writing briefly $\nu=\nu(k)$,  and noting that $R<r<2R$ and $1<\rho<2$, we have by \eqref{rf}
\begin{equation*}
\begin{split}
&\Big\|r^{-\frac{n-2}2}\Big(\sum_{k=0}^\infty\sum_{\ell=1}^{d(k)}\omega(k)\big\|J_{\nu(k)}(
r\rho)a_{k,\ell}(\rho)\varphi(\rho)\rho^{(n-2)/2+1/q'}\big\|_{L^{q'}_\rho}^2\Big)^{\frac12}
\Big\|_{L^q_{\mu(r)}([R,2R])} \\
\lesssim &\bigg(\int_{R}^{2R}
r^{-\frac{(n-2)q}2}\Big(\sum_{k=0}^\infty\sum_{\ell=1}^{d(k)}
\omega(k)\Big|\frac{(4
r)^{\nu}}{2^{\nu}\Gamma(\nu+\frac12)\Gamma(\frac12)}\Big|^2\big\|a_{k,\ell}(\rho)\rho^\nu\varphi(\rho)\big\|_{L^{q'}_\rho}^2\Big)^{\frac
q2} r^{n-1}~dr\bigg)^{\frac1q}\\
\lesssim & R^{\frac
nq}\bigg(\sum_{k=0}^\infty\sum_{\ell=1}^{d(k)} \omega(k)\Big[\frac{(2
R)^{\nu-\frac{n-2}2}}{\Gamma(\nu+\frac12)}\Big]^2\big\|a_{k,\ell}(\rho)\rho^\nu\varphi(\rho)\big\|_{L^{q'}_\rho}^2\bigg)^{\frac
12}\\
\lesssim &R^{\frac
nq}\bigg(\sum_{k=0}^\infty\sum_{\ell=1}^{d(k)}
\omega(k)\big\|a_{k,\ell}(\rho)\varphi(\rho)\big\|_{L^{q'}_\rho}^2\bigg)^{\frac
12} .
\end{split}
\end{equation*}
In the last inequality, we use the Stirling formula
$\Gamma\left(\nu+1\right)\sim \sqrt{\nu}(\nu/e)^\nu$ and the fact
that $R\lesssim 1$ and $\nu\geq (n-2)/2$.
\end{proof}

Now we are in a position to prove Proposition \ref{linear estimates} when $R\gg1$.
We first prove \eqref{lr1} by making use of  \eqref{Fouri}.
Since
$\text{supp}~g\subset\{\xi:|\xi|\in [1,2]\}$, we may assume
$|\xi_n|\sim1$. Then we freeze one spatial variable, say $x_n$, with
$|x_n|\lesssim R$ and free other spatial variables $x'=(x_1,\ldots,
x_{n-1})$. After making the change of variables $\eta_j=\xi_j,~
\eta_n=|\xi|^2$ with $j=1,\ldots n-1$, we use the Plancherel theorem
on the spacetime Fourier transform in $(t,x')$ to obtain
\eqref{lr1}.\vspace{0.2cm}

When $R\gg1$,  inequality \eqref{lr2} is a consequence of the  interpolation theorem  and the following proposition.
\begin{proposition}\label{Local}
Assume $f\in \mathbb{L}_1$ and $R\gg1$ is a dyadic number.  For every
small constant $0<\epsilon\ll1$, we have the following inequalities \vspace{0.1cm}

$\bullet$ For $q=4$, we have
\begin{equation}\label{q=4}
\|(f~d\sigma)^\vee\|_{L^4_{t,x}(\R\times
A_R)}\lesssim R^{-\frac{n-1}4+\epsilon}
\|(1+|\Omega|\big)^{\frac{n}4}f\|_{L^4(S;~d\sigma)}.
\end{equation}

$\bullet$ For $q=6$, we have
\begin{equation}\label{q=6}
\|(f~d\sigma)^\vee\|_{L^6_{t,x}(\R\times A_R)}\lesssim R^{-\frac{n-1}3+\epsilon} \|\big(1+|\Omega|\big)^{\frac{n-1}{3}}
f\|_{L^2(S;~d\sigma)}.
\end{equation}
\end{proposition}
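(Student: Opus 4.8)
The plan is to separate high and low angular frequencies: on the high frequencies one invokes the Stein--Tomas theorem and lets the angular regularity pay for the $R$--decay, while on the low frequencies one replaces the Bessel functions by the oscillatory asymptotics \eqref{b3} and reduces to the one--dimensional restriction theorem for the parabola. Set $m=(n-1)(\tfrac12-\tfrac1q)$, so $m=\tfrac{n-1}4$ for $q=4$ and $m=\tfrac{n-1}3$ for $q=6$, and split $f=f_{\mathrm{lo}}+f_{\mathrm{hi}}$ according to whether the spherical--harmonic degree $k$ is $\le c_0R$ or $>c_0R$, for a suitable small $c_0$.

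For $f_{\mathrm{hi}}$ no curvature is needed. Since $q\in\{4,6\}$ lies in the Stein--Tomas range $q\ge 2(n+2)/n$ for $n\ge2$, the global inequality gives $\|(f_{\mathrm{hi}}\mathrm{d}\sigma)^{\vee}\|_{L^q_{t,x}(\R\times A_R)}\le\|(f_{\mathrm{hi}}\mathrm{d}\sigma)^{\vee}\|_{L^q_{t,x}(\R\times\R^n)}\lesssim\|f_{\mathrm{hi}}\|_{L^2(S;\mathrm{d}\sigma)}$; and since every mode of $f_{\mathrm{hi}}$ is multiplied by at least $\sim R^{m}$ under $(1+|\Omega|)^{m}$, the right side is $\lesssim R^{-m}\|(1+|\Omega|)^{m}f\|_{L^2(S;\mathrm{d}\sigma)}$, which already gives \eqref{q=6}; for \eqref{q=4} one finishes with H\"older's inequality on the compact set $S$. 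This is exactly the ``compensation of the low decay of $J_\nu$ by a loss of angular regularity'' mentioned in the introduction.

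For $f_{\mathrm{lo}}$ I would use the expansion \eqref{Fouri1}, bound the angular $L^q_\theta$ norm by a weighted $\ell^2_{k,\ell}$ norm through the Sobolev embedding $H^{m,2}(\mathbb{S}^{n-1})\hookrightarrow L^q(\mathbb{S}^{n-1})$, and thereby reduce to a square--function estimate for the radial pieces $c_{k,\ell}(t,r)=\int_0^\infty e^{-it\rho^2}J_{\nu(k)}(r\rho)a_{k,\ell}(\rho)\rho^{n/2}\varphi(\rho)\,\mathrm{d}\rho$. On the effective set $1\le\rho\le2$, $R/2\le r\le R$, $k\le c_0R$ one has $r\rho\ge 2\nu(k)$, so \eqref{b3} applies and, up to the factor $r^{-1/2}$, the order--zero symbol $a_\pm(\nu,r\rho)$, and an error that is smaller by a power of $R$, recasts $r^{-\frac{n-2}{2}}c_{k,\ell}(t,r)$ as a piece of the extension operator $E$ of the parabola $\rho\mapsto(\rho,\rho^2)$ in the two variables $(r,t)$. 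Collecting the power $r^{-\frac{n-2}{2}-\frac12}$ against the measure $r^{n-1}\,\mathrm{d}r$ on $S_R$ produces the factor $R^{-m}$ and leaves a vector--valued, slab--localized $L^q$ extension estimate for the one--dimensional parabola (each scalar estimate automatically yielding its $\ell^2$--valued version since $q/2\ge1$): for $q=6$ this is the global endpoint estimate $\|Eh\|_{L^6(\R^2)}\lesssim\|h\|_{L^2}$, while for $q=4$, where the global $L^4$ estimate for the parabola fails, it is a slab--localized endpoint estimate $\|Eh\|_{L^4_{t,r}(\R\times S_R)}\lesssim R^{\epsilon}\|h\|_{L^2}$. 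Reversing the Sobolev step, summing in $k$ and $\ell$, and recombining $f_{\mathrm{lo}}$ with $f_{\mathrm{hi}}$ then yield \eqref{q=4} and \eqref{q=6}; the $R^{\epsilon}$ loss originates in the $L^4$ estimate for $q=4$ and in the $O(\log R)$ dyadic blocks $k\sim R$.

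The main obstacle is the slab--localized $L^4$ parabola estimate for $f_{\mathrm{lo}}$. At the endpoint exponent one must genuinely use the localization of $x$ to $A_R$ --- equivalently of $r$ to $[R/2,R]$, which, through the stationary--phase geometry, also confines the time variable --- so the $TT^*$/Carleson--Sj\"olin argument forces one to write out and estimate the oscillatory kernel $\int e^{-i(t-t')\rho^2+i(r-r')\rho}a_\pm(\nu,r\rho)\overline{a_\pm(\nu,r'\rho)}\,\mathrm{d}\rho$, to absorb the mild $r$--dependence of the amplitude $a_\pm(\nu,\cdot)$ (harmless, being a symbol of order $0$), and to check that the outcome costs only $R^{\epsilon}$, so that all the honest $R$--decay comes from the geometric factor $r^{-\frac{n-2}{2}-\frac12}$. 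The remaining point --- estimating the error term in \eqref{b3} --- is routine.
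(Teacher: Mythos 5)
Your overall architecture (spherical harmonics, Stein--Tomas with the angular weight paying for the decay when $k\gtrsim R$, Bessel asymptotics plus a $TT^*$/Carleson--Sj\"olin argument when $k\ll R$) is the same as the paper's, and your high-frequency treatment and the exponent bookkeeping are fine. But the key lemma you isolate for the low-frequency part is false as stated: the slab-localized estimate $\|Eh\|_{L^4_{t,r}(\R\times S_R)}\lesssim R^{\epsilon}\|h\|_{L^2}$ for $Eh(r,t)=\int e^{i(t\rho^2+r\rho)}h(\rho)\,\mathrm{d}\rho$ with $h$ supported in $[1,2]$ fails. Take $h=\delta^{-1/2}\chi_{[1,1+\delta]}$ with $\delta=cR^{-1/2}$, so $\|h\|_{L^2}=1$. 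On the set $D=\{(r,t):r\in[R,2R],\,|2t+r|\le\tfrac1{10}\delta^{-1}\}$ the phase varies by less than $1/2$ over the support, so $|Eh|\gtrsim\delta^{1/2}$ there, while $|D|\gtrsim R\,\delta^{-1}$; hence $\|Eh\|_{L^4(\R\times S_R)}^4\gtrsim R\delta\sim R^{1/2}$, i.e.\ the operator norm is at least $R^{1/8}$. (Note $t\sim -R$ on $D$, so confining the time variable does not help.) The correct substitute --- which is exactly the paper's Claim \eqref{claim} --- measures the data in $L^4_\rho$, not $L^2_\rho$ (the same bump has $\|h\|_{L^4}\sim R^{1/8}$, restoring consistency), and this is why the target norm in \eqref{q=4} is $L^4(S)$. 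Proving the $L^4_\rho$-based version is the real content of the section: one expands $\|H_\nu a\|_{L^4}^4$, uses the $t$-integration to impose the resonance $\rho_1^2-\rho_2^2+\rho_3^2-\rho_4^2=0$, shows the lower bound $|\phi_r'|\ge|\rho_1^2-\rho_2^2|\,|\rho_3^2-\rho_2^2|$ on that surface to get the kernel decay \eqref{decay}, and then needs the genuinely quadrilinear combinatorial estimate \eqref{eq-goal} (Lemma \ref{Integral}), none of which is supplied by a scalar restriction theorem for the parabola.

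A second, related problem is your reduction of $J_\nu(r\rho)$ to $r^{-1/2}a_\pm(\nu,r\rho)e^{\pm ir\rho}$ with $a_\pm$ treated as "a symbol of order $0$". Writing $a_\pm(\nu,r)=(1-\nu^2/r^2)^{-1/4}e^{\pm i(\theta(r)\mp r)}$ with $\theta$ the Barcelo--Cordoba phase, one has $\frac{d}{dr}(\theta(r)-r)\sim-\nu^2/(2r^2)$, so over the slab $r\in[R,2R]$ the "amplitude" accumulates a phase of size $\nu^2/R$, which is $\gg1$ throughout the range $R^{1/2}\ll\nu\ll R$; it is not a symbol of order $0$ uniformly in $\nu$, and it cannot be absorbed harmlessly in the $TT^*$ kernel. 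The paper avoids this by never linearizing the phase: it keeps $I_\nu(r)=\sqrt{2/\pi}\,e^{i\theta(r)}(r^2-\nu^2)^{-1/4}$ exactly and uses the explicit formula $\theta'(r)=\sqrt{r^2-\nu^2}/r$ in the resonance computation \eqref{dtheta}, which is precisely what produces the factorized lower bound for $\phi_r'$. (Your $q=6$ route via the global $L^6(\R^2)$ parabola estimate suffers from the same amplitude issue; the paper instead deduces $q=6$ by interpolating the $L^4$ claim with a trivial $L^1_\rho\to L^\infty$ bound, so both exponents ultimately rest on the $L^4_\rho$-based slab estimate.)
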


\begin{remark}
It seems to be possible to remove the $\epsilon$-loss in \eqref{q=6}, but we do not purchase this option here because we do not need it in this paper.

\end{remark}

To prove this proposition, we firstly show
\begin{lemma} Assume $f\in {\mathbb L}_1$ and $R\gg1$. We have the following estimate
\begin{equation}\label{R>1}
\begin{split}
\|(f~d\sigma)^\vee\|_{L^4_{t,x}(\R\times
A_R)}\lesssim R^{-\frac{n-1}4+\epsilon}
\|g\|_{L_\rho^4 H_\omega^{\frac n4,4}(\mathbb{S}^{n-1})},\end{split}
\end{equation}
where $0<\epsilon\ll1$,  and $g(\xi)=f(|\xi|^2,\xi)$.
\end{lemma}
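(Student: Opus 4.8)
The plan is to feed the spherical harmonic expansion \eqref{Fouri1} into the left-hand side, reduce matters to a square-function $L^4$ bound for the one-dimensional radial operators
\begin{equation*}
(T_ka)(t,r):=\int_0^\infty e^{-2\pi it\rho^2}J_{\nu(k)}(2\pi r\rho)a(\rho)\varphi(\rho)\rho^{\frac n2}\,\mathrm{d}\rho,
\end{equation*}
and then analyze the kernels of these operators regime by regime with the help of Lemma \ref{Bessel}. First, writing \eqref{Fouri1} as $(f\mathrm{d}\sigma)^\vee(t,r\theta)=2\pi\sum_{k,\ell}i^kY_{k,\ell}(\theta)\,r^{-(n-2)/2}(T_ka_{k,\ell})(t,r)$, I would decompose the angular frequency dyadically into blocks $k\in[N,2N)$. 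On each block the angular profile lies in $\bigoplus_{k<2N}\mathcal{H}^k$, so the reproducing-kernel (Bernstein) inequality on $\mathbb{S}^{n-1}$ gives $\|\cdot\|_{L^4_\theta}\lesssim N^{(n-1)/4}\|\cdot\|_{L^2_\theta}$; this is precisely the step that produces the loss of $(n-1)/4$ angular derivatives. Together with the $L^2_\theta$-orthogonality \eqref{2.3}, the block is controlled by $N^{(n-1)/4}$ times the square function $\big\|\big(\sum_{k\sim N,\ell}\big|r^{-(n-2)/2}T_ka_{k,\ell}\big|^2\big)^{1/2}\big\|_{L^4_tL^4_{\mu(r)}(\R\times S_R)}$, and one is reduced to proving the vector-valued estimate
\begin{equation*}
\Big\|\Big(\sum_{k\sim N,\ell}\big|r^{-\frac{n-2}2}T_ka_{k,\ell}\big|^2\Big)^{\frac12}\Big\|_{L^4_tL^4_{\mu(r)}(\R\times S_R)}\lesssim R^{-\frac{n-1}4+\epsilon}\Big\|\Big(\sum_{k\sim N,\ell}|a_{k,\ell}|^2\Big)^{\frac12}\varphi\Big\|_{L^4_\rho},
\end{equation*}
which I would attack by a $TT^*$/Carleson--Sj\"olin argument, i.e.\ by estimating the kernel
\begin{equation*}
K_k(t,r;s,r')=(rr')^{-\frac{n-2}2}\int_0^\infty e^{-2\pi i(t-s)\rho^2}J_{\nu(k)}(2\pi r\rho)\,J_{\nu(k)}(2\pi r'\rho)\,\rho^n\varphi(\rho)^2\,\mathrm{d}\rho
\end{equation*}
summed over $k\sim N$.

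Next I would split according to how $\nu(k)\sim N$ compares with $r\sim R$. When $N\gg R$, the exponential decay \eqref{b1} of $J_{\nu(k)}(2\pi r\rho)$ makes the block negligible with plenty of room. When $N\ll R$, I would insert the asymptotics \eqref{b3}: up to an $O(R^{-1})$ error and a slowly varying amplitude $a_\pm(\nu,2\pi r\rho)$, the operator $r^{-(n-2)/2}T_k$ is $r^{-(n-1)/2}$ times the one-dimensional extension operator for the parabola $\{(\rho,\rho^2)\}$; the sharp $L^4_{t,r}(\R^2)$ bound for the latter costs $\|\cdot\|_{L^4_\rho}$ (the $p=q=4$ endpoint of one-dimensional restriction, which follows from a bilinear computation and Hardy--Littlewood--Sobolev), while the factor $r^{-(n-1)/2}$ weighed against $r^{n-1}\,\mathrm{d}r$ on $S_R$ supplies exactly the gain $R^{-(n-1)/4}$; the $r$-dependence of $a_\pm$ is removed by a further dyadic decomposition in $r$ (or an integration by parts) at a cost $R^\epsilon$.

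The delicate case is the transition regime $N\sim R$. There I would isolate the $O(\nu^{1/3})$-wide turning-point core of $J_{\nu(k)}$, on which only the Airy-type pointwise bound \eqref{b2}, $|J_{\nu(k)}|\lesssim\nu^{-1/3}$, is available, from its complement, on which $J_{\nu(k)}$ is again oscillatory of WKB type and is handled as in the case $N\ll R$; on the core one uses the non-stationarity of the Schr\"odinger phase $\rho\mapsto\rho^2$ (integration by parts in $\rho$) to recover enough $t$-integrability, the point being that the resulting shortfall in decay is no worse than $\nu^{1/3}\sim R^{1/3}$, which for every $n\ge2$ is absorbed by the Bernstein factor $N^{(n-1)/4}$ together with the gain $R^{-(n-1)/4}$, up to the harmless $R^\epsilon$.

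Finally, summing over the dyadic blocks $N$: the contributions with $N\gg R$ are negligible, those with $N\ll R$ form a convergent geometric series (a power of $N$ is to spare after the Bernstein factor), and the transition block $N\sim R$ is reconciled with the norm $\|g\|_{L^4_\rho H^{(n-1)/4,4}_\omega}$ on the right-hand side via the Littlewood--Paley characterization of that norm and the trivial embedding $\|P_Ng(\rho\cdot)\|_{L^2_\omega}\lesssim\|P_Ng(\rho\cdot)\|_{L^4_\omega}$ on the finite-measure sphere, where $P_N$ is the angular frequency projection; this yields \eqref{R>1}, and hence (after identifying $H^{(n-1)/4,4}_\omega$ with $(1+|\Omega|)^{(n-1)/4}$) the estimate \eqref{q=4} of Proposition \ref{Local}. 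The hard part is this transition regime $\nu(k)\sim r$: the Bessel function there is neither purely oscillatory nor exponentially small, its decay is only $\nu^{-1/3}$, and one must combine \eqref{b2} with the curvature of the paraboloid multiplier to keep the spacetime integral under control while checking that the decay deficit never exceeds the $(n-1)/4$ angular-derivative budget; a secondary technical point is that the $TT^*$ analysis must be carried out uniformly over each dyadic block $k\sim N$ so as to produce the square-function, rather than merely scalar, estimate.
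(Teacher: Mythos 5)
Your architecture coincides with the paper's for two of the three regimes: for $\nu(k)\gg R$ you use the exponential decay \eqref{b1} exactly as in the paper's Case 1, and for $\nu(k)\ll R$ your $TT^*$/Carleson--Sj\"olin analysis of the kernel, based on the oscillatory asymptotics of $J_\nu$ for $r\gg\nu$ and the resonance identity $\rho_1^2-\rho_2^2+\rho_3^2-\rho_4^2=0$, is precisely the paper's Case 3 (the claim \eqref{claim}, proved via the kernel bound \eqref{decay} and the bilinear integral Lemma \ref{Integral}). Two remarks there: the global ``$p=q=4$ endpoint of one-dimensional restriction'' you invoke is actually false for the parabola in $\R^{1+1}$ (the necessary condition is $q>4$ when $n=1$); what is true, and what the paper proves, is the localized version with the factor $R^{-1/2+\epsilon}$, and the $R^{\epsilon}$ arises from summing the near-resonant dyadic pieces $i+j\lesssim\log_2 R$ in the kernel estimate, not merely from the $r$-dependence of the amplitudes $a_\pm$.

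Where you genuinely diverge is the transition block $\nu(k)\sim R$. You propose a direct analysis via the Airy-type bound \eqref{b2} near the turning point plus WKB oscillation off it; the paper never touches the Bessel function there. Instead it observes that on this block the angular weight does all the work: by the global Strichartz/Stein--Tomas estimate \eqref{str} one has $\|(f\mathrm{d}\sigma)^\vee\|_{L^4_{t,x}}\lesssim\|g\|_{L^2}$ with no decay in $R$ at all, and since $k\sim R$ one trades $\|g\|_{L^2_\omega}\lesssim R^{-(n-1)/4}\|g\|_{H^{(n-1)/4,2}_\omega}$, which supplies the entire factor $R^{-(n-1)/4}$ (see \eqref{Rk} with $q=p=4$). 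Your route is workable in principle --- once the Bernstein loss $N^{(n-1)/4}$ is cancelled against the target gain $R^{-(n-1)/4}$ with $N\sim R$, only an $O(R^{\epsilon})$ bound on the radial operators is required on this block --- but your accounting of how the ``$\nu^{1/3}$ shortfall'' is absorbed is muddled (the Bernstein factor and the gain exactly cancel when $N\sim R$, so they absorb nothing; what saves you is that nothing needs to be absorbed), and the turning-point estimate is left as an assertion: integration by parts in $\rho$ against the phase $t\rho^2$ is obstructed by the large $\rho$-derivatives of the Airy profile, of size $R^{2/3}$, so one must split $|t|\lessgtr R^{2/3}$ and track the measure $\sim R^{-2/3}$ of the turning-point core in $\rho$. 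The paper's trick makes all of this unnecessary, which is the main thing you missed.
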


\begin{proof} By the scaling argument and \eqref{Fouri1}, it suffices to estimate the quantity
\begin{equation}
\begin{split}
 \bigg\| r^{-\frac{n-2}2}\sum_{k=0}^{\infty}\sum_{\ell=1}^{d(k)}
i^{k}Y_{k,\ell}(\theta) \int_0^\infty e^{- it\rho^2}
J_{\nu(k)}( r\rho)a_{k,\ell}(\rho)\rho^{\frac
n2}\varphi(\rho)~d\rho
\bigg\|_{L^4_{t,x}(\R\times
A_R)}.
\end{split}
\end{equation}
In the following, we consider the three cases. For the first two cases, we establish the estimates for general $q\geq 4$ so that
we can use them directly for $q=6$ later.

$\bullet$ Case 1: $k\in \Omega_1:=\{k:R\ll \nu(k)\}$.
Let $\omega(k)=(1+k)^{2(n-1)(1/2-1/q)}$ again. We have by a similar argument as in the proof of Lemma \ref{lem1}:
\begin{equation*}
\begin{split}
& \bigg\| r^{-\frac{n-2}2}\sum_{k\in\Omega_1}\sum_{\ell=1}^{d(k)}
i^{k}Y_{k,\ell}(\theta) \int_0^\infty e^{- it\rho^2} J_{\nu(k)}(
r\rho)a_{k,\ell}(\rho)\rho^{\frac n2}\varphi(\rho)~d\rho
\bigg\|_{L^q_{t,x}(\R\times A_R)}\\
\lesssim&
\bigg\|r^{-\frac{n-2}2}\Big(\sum_{k\in\Omega_1}
\sum_{\ell=1}^{d(k)}\omega(k)\Big|\int_0^\infty e^{it\rho^2}
J_{\nu(k)}( r\rho)a_{k,\ell}(\rho)\varphi(\rho)\rho^{
\frac{n-2}2}\rho~d\rho\Big|^2\Big)^{\frac12}
\bigg\|_{L^q_t(\R;L^q_{\mu(r)}(S_R))}\\
\lesssim&
\bigg\|r^{-\frac{n-2}2}\Big(\sum_{k\in\Omega_1}\sum_{\ell=1}^{d(k)}\omega(k)
\big\|J_{\nu(k)}(
r\rho)a_{k,\ell}(\rho)\varphi(\rho)\rho^{(n-2)/2+1/q'}\big\|_{L^{q'}_\rho}^2\Big)^{\frac12}
\bigg\|_{L^q_{\mu(r)}(S_R)}.
\end{split}
\end{equation*}
Recall that for $R\gg1$ and $k\in\Omega_1$,  we have
$|J_{\nu(k)}(r)|\lesssim e^{-c(r+\nu)}$  by  \eqref{b1}. Using $R<r<2R$ and
$1<\rho<2$, we obtain
\begin{equation*}
\begin{split}
&\Big\|r^{-\frac{n-2}2}\Big(\sum_{k\in\Omega_1}\sum_{\ell=1}^{d(k)}\omega(k)\big\|J_{\nu(k)}(
r\rho)a_{k,\ell}(\rho)\varphi(\rho)\rho^{(n-2)/2+1/q'}\big\|_{L^{q'}_\rho}^2\Big)^{\frac12}
\Big\|_{L^q_{\mu(r)}([R,2R])} \\
\lesssim &\bigg(\int_{R}^{2R}
r^{-\frac{(n-2)q}2}\Big(\sum_{k\in\Omega_1}\sum_{\ell=1}^{d(k)}
\omega(k)e^{-(r+\nu)}\big\|a_{k,\ell}(\rho)\rho^\nu\varphi(\rho)\big\|_{L^{q'}_\rho}^2\Big)^{\frac
q2} r^{n-1}~dr\bigg)^{\frac1q}\\
 \lesssim &
e^{-cR}\bigg(\sum_{k\in\Omega_1}\sum_{\ell=1}^{d(k)}
\omega(k)e^{-\nu(k)}
\big\|a_{k,\ell}(\rho)\rho^\nu\varphi(\rho)\big\|_{L^{q'}_\rho}^2\bigg)^{\frac
12}\\\lesssim& e^{-cR}\bigg(\sum_{k\in\Omega_1}\sum_{\ell=1}^{d(k)}
\omega(k)\big\|a_{k,\ell}(\rho)\varphi(\rho)\big\|_{L^{q'}_\rho}^2\bigg)^{\frac
12} .
\end{split}
\end{equation*}
By Minkowski's inequality and H\"older's inequality, we obtain
\begin{equation}\label{R<k}
\begin{split}
& \bigg\| r^{-\frac{n-2}2}\sum_{k\in\Omega_1}\sum_{\ell=1}^{d(k)}
i^{k}Y_{k,\ell}(\theta) \int_0^\infty e^{- it\rho^2} J_{\nu(k)}(
r\rho)a_{k,\ell}(\rho)\rho^{\frac n2}\varphi(\rho)~d\rho
\bigg\|_{L^q_{t,x}(\R\times A_R)} \\
\lesssim& e^{-cR}
\bigg\|\Big(\sum_{k=0}^\infty\sum_{\ell=1}^{d(k)}\omega(k)\big|a_{k,\ell}(\rho)\big|^2\Big)^{\frac12}\varphi(\rho)\bigg\|_{L^{p}_{\rho}}
.
\end{split}
\end{equation}
Applying  this with $q=4=p$, we have
\begin{equation*}
\begin{split}
& \bigg\| r^{-\frac{n-2}2}\sum_{k\in\Omega_1}\sum_{\ell=1}^{d(k)}
i^{k}Y_{k,\ell}(\theta) \int_0^\infty e^{- it\rho^2} J_{\nu(k)}(
r\rho)a_{k,\ell}(\rho)\rho^{\frac n2}\varphi(\rho)~d\rho
\bigg\|_{L^4_{t,x}(\R\times A_R)} \\
 \lesssim & e^{-cR}
\bigg\|\Big(\sum_{k=0}^\infty\sum_{\ell=1}^{d(k)}(1+k)^{(n-1)/2}\big|a_{k,\ell}(\rho)\big|^2\Big)^{\frac12}
\varphi(\rho)\bigg\|_{L^{4}_{\rho}} \\
\lesssim&
R^{-\frac{n-1}4+\epsilon}\|g\|_{L_\rho^4
H_\omega^{(n-1)/4,4}(\mathbb{S}^{n-1})}.
\end{split}
\end{equation*}

$\bullet$ Case 2: $k\in \Omega_2:=\{k: \nu(k)\sim R
\}$. Recalling $g(\xi)=f(|\xi|^2, \xi)$,  and using the Sobolev embedding, the Strichartz estimate and the fact $\text{supp}~g\subset\{\xi\in\R^n:|\xi|\in [1,2]\}$,
we have for $q\geq4$ and $\frac2q=n(\frac12-\frac1r)$
\begin{equation}\label{str}
\begin{split}
\|(f~d\sigma)^{\vee}\|_{L^q_{t,x}(\R\times\R^n)}\lesssim \|(f~d\sigma)^{\vee}\|_{L^q(\R; H^m_{r}(\R^n))}\lesssim\| \hat g\|_{H^{m}(\R^n)}\lesssim \| g\|_{L^2(\R^n)}
\end{split}
\end{equation}
where $m=\frac{(q-2)n-4}{2q}\geq0$ since $n\geq2$.
If $g=\bigoplus_{k\in\Omega_2} \mathcal{H}^{k}$, then
\begin{equation}
\begin{split}
\|g\|^2_{L_\omega^2(\mathbb{S}^{n-1})}=&\sum_{k\in\Omega_2}\sum_{\ell=1}^{{d}(k)}|a_{k,\ell}|^2\\\lesssim&
R^{-2(n-1)(1/2-1/q)}
\sum_{k\in\Omega_2}\sum_{\ell=1}^{{d}(k)}(1+k)^{2(n-1)(1/2-1/q)}|a_{k,\ell}|^2
\\\lesssim& R^{-2(n-1)(1/2-1/q)}\|g\|^2_{H_\omega^{(n-1)(\frac12-\frac1q),2}(\mathbb{S}^{n-1})} .\end{split}
\end{equation}
Since $\text{supp} g\subset \{\xi\in\R^n: |\xi|\in [1,2]\}$ and $p\geq2$,  we have by H\"older's inequality and \eqref{str}
\begin{equation}\label{Rk}
\begin{split}
&\Big\| r^{-\frac{n-2}2}\sum_{k\in\Omega_2}\sum_{\ell=1}^{d(k)}
i^{k}Y_{k,\ell}(\theta) \int_0^\infty e^{- it\rho^2}
J_{\nu(k)}( r\rho)a_{k,\ell}(\rho)\rho^{\frac
n2}\varphi(\rho)~d\rho
\Big\|_{L^q_{t,x}(\R\times
A_R)}\\
 \lesssim&
R^{-(n-1)(1/2-1/q)}\|g\|_{L_\rho^p H_\omega^{(n-1)(\frac12-\frac1q),p}(\mathbb{S}^{n-1})}.
\end{split}
\end{equation}
In particular, when $q=p=4$, inequality \eqref{Rk} implies that
\begin{equation}
\begin{split}
&\Big\| r^{-\frac{n-2}2}\sum_{k\in\Omega_2}\sum_{\ell=1}^{d(k)}
i^{k}Y_{k,\ell}(\theta) \int_0^\infty e^{- it\rho^2}
J_{\nu(k)}( r\rho)a_{k,\ell}(\rho)\rho^{\frac
n2}\varphi(\rho)~d\rho
\Big\|_{L^4_{t,x}(\R\times
A_R)}\\
 \lesssim&
R^{-(n-1)/4}\|g\|_{L_\rho^4 H_\omega^{(n-1)/4,4}(\mathbb{S}^{n-1})}.
\end{split}
\end{equation}

$\bullet$ Case 3: $k\in \Omega_3:=\{k:  \nu(k)\ll R\}$. We need the following lemma about the oscillation and decay property of a Bessel function.
This lemma was proved by
Barcelo-Cordoba \cite{BC}.

\begin{lemma}[Oscillation and asymptotic property, \cite{BC}]  Let $\nu>1/2$ and $r>\nu+\nu^{1/3}$. There exists a constant number $C$ independent of $r$
and $\nu$ such that
\begin{equation}
J_{\nu}(r)=\sqrt{\frac2{\pi}}\frac{\cos\theta(r)}{(r^2-\nu^2)^{1/4}}+h_\nu(r),
\end{equation}
where $\theta(r)=(r^2-\nu^2)^{1/2}-\nu\arccos\frac{\nu}r-\frac\pi4$ and
\begin{equation}
|h_{\nu}(r)|\leq
C\left(\left(\frac{\nu^2}{(r^2-\nu^2)^{7/4}}+\frac1r\right)1_{[\nu+\nu^{1/3},2\nu]}(r)+\frac1r1_{[2\nu,\infty)}(r)\right).
\end{equation}
\end{lemma}

\vskip0.12cm

Note that $\nu(k)=k+(n-2)/2$ and $k\in\Omega_3$,   we can write
$$J_\nu(r)=I_{\nu}(r)+\bar{I}_{\nu}(r)+h_\nu(r), \quad \;\text{where}\;\; |h_\nu(r)|\lesssim r^{-1},$$
 and
$$I_{\nu}(r)=\frac{\sqrt{2/\pi}e^{i\theta(r)}}{\left(r^2-\nu^2\right)^{1/4}}.$$
A simple computation yields to
\begin{equation}\label{dtheta}
\left\{\begin{aligned}
&\theta'(r)=(r^2-\nu^2)^{1/2}r^{-1},\\
&\theta''(r)=(r^2-\nu^2)^{-1/2}-(r^2-\nu^2)^{1/2}r^{-2}=(r^2-\nu^2)^{-1/2}\nu^2r^{-2},\\
&\theta'''(r)=\frac{\nu^2}r(r^2-\nu^2)^{-3/2}\nu^2r^{-2}\left(-3+\frac{2\nu^2}{r^2}\right).
\end{aligned}\right.
\end{equation}
Using Sobolev embedding on sphere and Minkowski's inequality, we
estimate
\begin{equation*}
\begin{split}
& \bigg\| r^{-\frac{n-2}2}\sum_{k\in\Omega_3}\sum_{\ell=1}^{d(k)}
i^{k}Y_{k,\ell}(\theta) \int_0^\infty e^{- it\rho^2} J_{\nu(k)}(
r\rho)a_{k,\ell}(\rho)\rho^{\frac n2}\varphi(\rho)~d\rho
\bigg\|_{L^4_{t,x}(\R\times A_R)}\\
\lesssim&
R^{-\frac{n-2}2}\bigg\|
\Big(\sum_{k\in\Omega_3}\sum_{\ell=1}^{d(k)}(1+k)^{(n-1)/2}
\Big|\int_0^\infty e^{- it\rho^2} J_{\nu(k)}(
r\rho)a_{k,\ell}(\rho)\rho^{\frac
n2}\varphi(\rho)~d\rho\Big|^2\Big)^{1/2}
\bigg\|_{L^4_t(\R;L^4_{\mu(r)}(S_R))}
\\
\lesssim& R^{-\frac{n-3}4}\bigg\|\bigg(\sum_{k\in\Omega_3}\sum_{\ell=1}^{d(k)}(1+k)^{(n-1)/2}
\Big|\int_0^\infty e^{- it\rho^2} J_{\nu(k)}(
r\rho)a_{k,\ell}(\rho)\rho^{\frac
n2}\varphi(\rho)~d\rho\Big|^2\Big)^{1/2}
\bigg\|_{L^4_t(\R;L^4_{r}(S_R))}.
\end{split}
\end{equation*}
Since  $J_\nu(r)=I_{\nu}(r)+\bar{I}_{\nu}(r)+h_\nu(r)$, it suffices to estimate two terms
\begin{equation}\label{hv}
\begin{split}
&\bigg(\sum_{k\in\Omega_3}\sum_{\ell=1}^{d(k)}(1+k)^{(n-1)/2}
 \Big\|\int_0^\infty e^{- it\rho^2}
h_{\nu(k)}( r\rho)a_{k,\ell}(\rho)\rho^{\frac
n2}\varphi(\rho)~d\rho\Big\|_{L^4_t(\R;L^4_{r}(S_R))}^{2}\bigg)^{1/2}\\&\lesssim  R^{-3/4}\|g\|_{L_\rho^4
H_\omega^{\frac{n-1}4,4}(\mathbb{S}^{n-1})}
\end{split}
\end{equation}
and \begin{equation}\label{Iv}
\begin{split}
&\bigg\|\bigg(\sum_{k\in\Omega_3}\sum_{\ell=1}^{d(k)}(1+k)^{(n-1)/2}
\Big|\int_0^\infty e^{- it\rho^2} I_{\nu(k)}(
r\rho)a_{k,\ell}(\rho)\rho^{\frac
n2}\varphi(\rho)~d\rho\Big|^2\Big)^{1/2}
\bigg\|_{L^4_t(\R;L^4_{r}(S_R))}\\&\lesssim  R^{-1/2+\epsilon}\|g\|_{L_\rho^4
H_\omega^{\frac{n}4,4}(\mathbb{S}^{n-1})}.
\end{split}
\end{equation}
For the first purpose, we consider the operator
$$T_{\nu}(a)(t,r)=\chi\Big(\frac r R\Big)\int_0^\infty
e^{- it\rho^2} h_{\nu}(
r\rho)a(\rho)\rho^{\frac n2}\varphi(\rho) d\rho$$
where $|h_\nu(r)|\leq C/r$.  By a similar argument as in the proof of Lemma \ref{lem}, it is easy to see
\begin{equation}\label{T}
\|T_{\nu}(a)(t,r)\|_{L^q_{t,r}}\leq
R^{-1/q'}\|a\varphi\|_{L^{q'}_\rho}.
\end{equation}
Hence we have
\begin{equation*}
\begin{split}
&\Big(\sum_{k\in\Omega_3}\sum_{\ell=1}^{d(k)}(1+k)^{(n-1)/2}
 \Big\|\int_0^\infty e^{- it\rho^2}
h_{\nu(k)}( r\rho)a_{k,\ell}(\rho)\rho^{\frac
n2}\varphi(\rho)~d\rho\Big\|_{L^4_t(\R;L^4_{r}(S_R))}^{2}\Big)^{1/2}\\
\lesssim&
 R^{-3/4}\Big(\sum_{k\in\Omega_3}\sum_{\ell=1}^{d(k)}(1+k)^{(n-1)/2}
\Big\|a_{k,\ell}(\rho)\varphi(\rho)\Big\|^2_{L^{4/3}}\Big)^{1/2}\\
\lesssim&
 R^{-3/4}\Big\|\Big(\sum_{k\in\Omega_3}\sum_{\ell=1}^{d(k)}(1+k)^{(n-1)/2}
\big|a_{k,\ell}(\rho)\big|^2\Big)^{1/2}\varphi \Big\|_{L^{4/3}}\\
\lesssim & R^{-3/4}\|g\|_{L_\rho^4 H_\omega^{\frac{n-1}4,4}(\mathbb{S}^{n-1})}
\end{split}
\end{equation*}
which implies \eqref{hv}.

Next we prove \eqref{Iv}. To this end,  let $\beta(\rho)=\rho^{\frac n2}\varphi(\rho)$, we see that
\begin{equation}\label{Iv'}
\begin{split}
&\left\|\left(\sum_{k\in\Omega_3}\sum_{\ell=1}^{d(k)}(1+k)^{(n-1)/2}
 \left|\int_0^\infty e^{- it\rho^2}
I_{\nu(k)}( r\rho)a_{k,\ell}(\rho)\rho^{\frac
n2}\varphi(\rho)\mathrm{d}\rho\right|^{2}\right)^{1/2}\right\|^4_{L^4_t(\R;L^4_{r}(S_R))}
\\&=\Big\|\sum_{k\in\Omega_3}\sum_{\ell=1}^{d(k)}(1+k)^{(n-1)/2}
\int_{\R^2}e^{- it(\rho_1^2-\rho_2^2)}
I_{\nu(k)}( r\rho_1)\overline{I_{\nu(k)}( r\rho_2)}\\&\qquad\qquad\times a_{k,\ell}(\rho_1)\overline{a_{k,\ell}(\rho_2)}\beta(\rho_1)\beta(\rho_2)\mathrm{d}\rho_1\mathrm{d}\rho_2\Big\|^2_{L^2_t(\R;L^2_{r}(S_R))}\\
&\leq \Big(\sum_{k\in\Omega_3}(1+k)^{(n-1)/2}
\Big\|\int_{\R^2}e^{- it(\rho_1^2-\rho_2^2)}
I_{\nu(k)}( r\rho_1)\overline{I_{\nu(k)}( r\rho_2)}\\&\qquad\qquad\times \sum_{\ell=1}^{d(k)} a_{k,\ell}(\rho_1)\overline{a_{k,\ell}(\rho_2)}\beta(\rho_1)\beta(\rho_2)\mathrm{d}\rho_1\mathrm{d}\rho_2\Big\|_{L^2_t(\R;L^2_{r}(S_R))}\Big)^2\\
&=\Big(\sum_{k\in\Omega_3}(1+k)^{(n-1)/2} \Big( \int_{\R^4} \sum_{\ell=1}^{d(k)} a_{k,\ell}(\rho_1)
\overline{a_{k,\ell}(\rho_2)}\sum_{\ell'=1}^{d(k)}\overline{a_{k,\ell'}(\rho_3)}a_{k,\ell'}(\rho_4)\beta(\rho_1)\beta(\rho_2)\beta(\rho_3)\beta(\rho_4)  \\&
\int_\R e^{- it(\rho_1^2-\rho_2^2+\rho_3^2-\rho_4^2)} dt K(R,\nu;\rho_1,\rho_2,\rho_3,\rho_4)   d\rho_1d\rho_2d\rho_3d\rho_4\Big)^{1/2}\Big)^2\\
\end{split}
\end{equation}
where the kernel
\begin{equation}\label{kernel}
\begin{split}
&K(R,\nu;\rho_1,\rho_2,\rho_3,\rho_4)\\
=&\int_0^\infty \frac{\chi(\frac r R)
e^{i\left(\theta(\rho_1r)-\theta(\rho_2r)+\theta(\rho_3r)-\theta(\rho_4r)\right)}}{\left((r\rho_1)^2-\nu^2\right)^{1/4}
\left((r\rho_2)^2-\nu^2\right)^{1/4}\left((r\rho_3)^2-\nu^2\right)^{1/4}\left((r\rho_4)^2-\nu^2\right)^{1/4}}dr.
\end{split}
\end{equation}
Now we analyze the kernel $K$. Let
$$\phi(r;\rho_1,\rho_2,\rho_3,\rho_4)=\theta(\rho_1r)-\theta(\rho_2r)+\theta(\rho_3r)-\theta(\rho_4r).$$
Hence if $\rho_1^2-\rho_2^2=\rho_4^2-\rho_3^2$, we have by
\eqref{dtheta}
\begin{equation*}
\begin{split}
\phi'_r&=(\rho_1^2-\rho_2^2)r\Big(\frac1{\sqrt{(r\rho_1)^2-\nu^2}+\sqrt{(r\rho_2)^2-\nu^2}}-\frac1{\sqrt{(r\rho_3)^2-\nu^2}
+\sqrt{(r\rho_4)^2-\nu^2}}\Big)\\
&=\frac{(\rho_1^2-\rho_2^2)(\rho_3^2-\rho_2^2)r^3}{\Big(\sqrt{(r\rho_1)^2-\nu^2}+\sqrt{(r\rho_2)^2-\nu^2}\Big)\Big(\sqrt{(r\rho_3)^2-\nu^2}
+\sqrt{(r\rho_4)^2-\nu^2}\Big)}\\&\qquad
\bigg(\frac{1}{\sqrt{(r\rho_3)^2-\nu^2}+\sqrt{(r\rho_2)^2-\nu^2}}+\frac{1}{\sqrt{(r\rho_1)^2-\nu^2}+\sqrt{(r\rho_4)^2-\nu^2}}\bigg).
\end{split}
\end{equation*}
Since $k\in\Omega_3$, one has $r\gg\nu(k)$. Therefore we have
\begin{equation*}
\begin{split}
|\phi'_r|&\geq |\rho_1^2-\rho_2^2|\cdot|\rho_3^2-\rho_2^2|.
\end{split}
\end{equation*}  Applying integration by parts with respect to  $r$ to \eqref{kernel}, we have for any $N\geq0$
\begin{equation}\label{decay}
K(R,\nu;\rho_1,\rho_2,\rho_3,\rho_4)\lesssim
R^{-1}\left(1+R|\rho_1^2-\rho_2^2|\cdot|\rho_3^2-\rho_2^2|\right)^{-N},
\end{equation}
when $\rho_1^2-\rho_2^2=\rho_4^2-\rho_3^2$.
 Let $b_{k,\ell}(\rho)=2a_{k,\ell}(\sqrt{\rho})\beta({\sqrt{\rho})}/\sqrt{\rho}$, from \eqref{Iv'} and \eqref{decay}, it suffices to estimate
\begin{equation*}
\begin{split}
&\Big(\sum_{k\in\Omega_3}(1+k)^{(n-1)/2} \Big(\int_{\R^4} \delta(\rho_1-\rho_2+\rho_3-\rho_4)
 K(R,\nu(k);\sqrt{\rho_1},\sqrt{\rho_2},\sqrt{\rho_3},\sqrt{\rho_4}) \\&\quad \times \sum_{\ell=1}^{d(k)} b_{k,\ell}(\rho_1)
\overline{b_{k,\ell}(\rho_2)}\sum_{\ell'=1}^{d(k)}\overline{b_{k,\ell'}(\rho_3)}b_{k,\ell'}(\rho_4) d\rho_1d\rho_2d\rho_3d\rho_4 \Big)^{1/2}\Big)^2\\
=&\Big(\sum_{k\in\Omega_3}(1+k)^{(n-1)/2} \Big(\int_{\R^3}
 K(R,\nu(k);\sqrt{\rho_1},\sqrt{\rho_2},\sqrt{\rho_3},\sqrt{\rho_1-\rho_2+\rho_3}) \\&\quad\times \sum_{\ell=1}^{d(k)} b_{k,\ell}(\rho_1)
\overline{b_{k,\ell}(\rho_2)}\sum_{\ell'=1}^{d(k)}\overline{b_{k,\ell'}(\rho_3)}b_{k,\ell'}(\rho_1-\rho_2+\rho_3) d\rho_1d\rho_2d\rho_3\Big)^{1/2}\Big)^2\\
\leq &R^{-1}\Big(\sum_{k\in\Omega_3}(1+k)^{(n-1)/2} \Big(\int_{\R^3}
  (1+R|\rho_1-\rho_2||\rho_3-\rho_2|)^{-N} \\&\quad\times \sum_{\ell=1}^{d(k)} \big| b_{k,\ell}(\rho_1)
\overline{b_{k,\ell}(\rho_2)}\big|\sum_{\ell'=1}^{d(k)}\big|\overline{b_{k,\ell'}(\rho_3)}b_{k,\ell'}(\rho_1-\rho_2+\rho_3)\big| d\rho_1d\rho_2d\rho_3\Big)^{1/2}\Big)^2\\
\lesssim &R^{-1}\Big(\sum_{k\in\Omega_3}(1+k)^{(n-1)/2}\Big( \int_{\R^3}
  (1+R|\rho_1-\rho_2||\rho_3-\rho_2|)^{-N}\\
\quad &\times b_k(\rho_1)b(\rho_2)b_k(\rho_3)b_k(\rho_1-\rho_2+\rho_3)d\rho_1d\rho_2d\rho_3\Big)^{1/2}\Big)^2
\end{split}
\end{equation*}
where $b_k(\rho)=\big(\sum_{\ell=1}^{d(k)} |b_{k,\ell}(\rho)|^2\big)^{1/2}$.
Then we aim to estimate
\begin{equation}\label{in-est}
\begin{split}
 \int_{\R^3}\frac{b(\rho_1)b(\rho_2)b(\rho_3)b(\rho_1-\rho_2+\rho_3)}{(1+R|\rho_1-\rho_2||\rho_3-\rho_2|)^{N}}d\rho_1d\rho_2d\rho_3\lesssim R^{-1+\epsilon}\|b\|_{L^4}^4.
\end{split}
\end{equation}
Indeed once we have proved \eqref{in-est},  we show
\begin{equation*}
\begin{split}
& \left\|\left(\sum_{k\in\Omega_3}\sum_{\ell=1}^{d(k)}(1+k)^{(n-1)/2}
 \left|\int_0^\infty e^{- it\rho^2}
I_{\nu(k)}( r\rho)a_{k,\ell}(\rho)\rho^{\frac
n2}\varphi(\rho)\mathrm{d}\rho\right|^{2}\right)^{1/2}\right\|^2_{L^4_t(\R;L^4_{r}(S_R))}\\&\lesssim  R^{-1+\epsilon}\sum_{k\in\Omega_3}(1+k)^{(n-1)/2+\frac12+\epsilon}(1+k)^{-\frac12-\epsilon}\left\|b_k\right\|^2_{L^4}
\\&\lesssim  R^{-1+2\epsilon}\Big(\sum_{k\in\Omega_3}(1+k)^{n}\Big\|(\sum_{\ell=1}^{d(k)} | b_{k,\ell}(\rho)|^2)^{1/2} \Big\|^4_{L^4}\Big)^{1/2}
\\&\lesssim  R^{-1+2\epsilon}\Big\|\big(\sum_{k\in\Omega_3}\sum_{\ell=1}^{d(k)} (1+k)^{\frac n2}| a_{k,\ell}(\rho)|^2\big)^{1/2} \Big\|^2_{L^4}
\end{split}
\end{equation*}
which implies \eqref{Iv}. Therefore, it remains to prove
\begin{equation}\label{eq-goal}
\int_{\R^3}\frac{b(\rho_1)b(\rho_2)b(\rho_3)b(\rho_1-\rho_2+\rho_3)}{\left(1+R|\rho_1-\rho_2|\cdot|\rho_3-\rho_2|\right)^N}d\rho_1
d\rho_2 d\rho_3 \lesssim R^{-1+\epsilon}\|b\|^4_{L^4}.
\end{equation}
For $R=2^{k_0}\gg1$, we decompose the integral into
\begin{equation}\label{dec}
\begin{split}
&\int_{\R^3}\frac{b(\rho_1)b(\rho_2)b(\rho_3)b(\rho_1-\rho_2+\rho_3)}{\left(1+R|\rho_1-\rho_2||\rho_3-\rho_2|\right)^N}d\rho_1 d\rho_2 d\rho_3\\
=&\bigg(\sum_{\{(i,j)\in \mathbb{N}^2; i+j\geq k_0\}}+\sum_{\{(i,j)\in \mathbb{N}^2; i+j\lesssim k_0\}}R^{-N}2^{N(i+j)}\bigg)\\
&\qquad\int b(\rho_2)
d\rho_2 \int_{|\rho_1-\rho_2|\sim 2^{-i}}b(\rho_1) d\rho_1 \int_{|\rho_3-\rho_2|\sim 2^{-j}}b(\rho_3)b(\rho_1-\rho_2+\rho_3) d\rho_3.
\end{split}
\end{equation}
To estimate it, we need the following lemma.
\begin{lemma}\label{Integral}
We have the following estimate for the integral
\begin{equation}\label{int}
\int b(\rho_2) d\rho_2 \int_{|\rho_1-\rho_2|\sim 2^{-i}}b(\rho_1) d\rho_1 \int_{|\rho_3-\rho_2|\sim 2^{-j}}b(\rho_3)b(\rho_1-\rho_2+\rho_3)
d\rho_3\lesssim 2^{-(i+j)}\|b\|_{L^4}^4.
\end{equation}
\end{lemma}

\begin{proof} We first have by H\"older's inequality
\begin{equation}\label{int1}
\begin{split}
& \int_{|\rho_3-\rho_2|\sim 2^{-j}}b(\rho_3)b(\rho_1-\rho_2+\rho_3) d\rho_3
\\
\lesssim& \left(\int_{|\rho_3-\rho_2|\sim 2^{-j}}|b(\rho_3)|^2 d\rho_3 \int_{|\rho_3-\rho_2|\sim{2^{-j}}}|b(\rho_1-\rho_2+\rho_3)|^2
d\rho_3\right)^{1/2}
\\
\lesssim& \left(\int_{|\rho_3-\rho_2|\sim 2^{-j}}|b(\rho_3)|^2 d\rho_3 \int_{|\rho|\sim{2^{-j}}}|b(\rho_1+\rho)|^2 d\rho\right)^{1/2}
\\
\lesssim& \left(\int_{|\rho_3-\rho_2|\sim 2^{-j}}|b(\rho_3)|^2 d\rho_3 \int_{|\rho-\rho_1|\sim{2^{-j}}}|b(\rho)|^2 d\rho\right)^{1/2}.
\end{split}
\end{equation}
Let $I$ be the left hand side of \eqref{int}. We estimate $I$ by \eqref{int1} and H\"older's inequality
\begin{equation*}
\begin{split}
&\int b(\rho_2)  \int_{|\rho_1-\rho_2|\sim 2^{-i}}\Big(\int_{|\rho_1-\rho|\sim 2^{-j}}|b(\rho)|^2 d\rho \Big)^{1/2}b(\rho_1) d\rho_1
\Big(\int_{|\rho_3-\rho_2|\sim 2^{-j}}|b(\rho_3)|^2  d\rho_3\Big)^{1/2} d\rho_2\\
 \lesssim &\|b\|_{L^4} \bigg\|\int_{|\rho_1-\rho_2|
\sim 2^{-i}}\Big(\int_{|\rho_1-\rho|\sim 2^{-j}}|b(\rho)|^2 d\rho \Big)^{1/2}|b(\rho_1)| d\rho_1 \bigg\|_{L^2_{\rho_2}}
\bigg\|\Big(\int_{|\rho_3-\rho_2|\sim 2^{-j}}|b(\rho_3)|^2  d\rho_3\Big)^{1/2}\bigg\|_{L^4}
\\
\lesssim&
\|b\|_{L^4}\Big\|\chi_i\ast\big((\chi_j\ast|b|^2)^\frac12|b|\big)\Big\|_{L^2}\big\|\chi_j\ast|b|^2\big\|^{1/2}_{L^2},
\end{split}
\end{equation*}
where $\chi_j=\chi_j(\rho)=\chi(2^j\rho)$ and $\chi\in C_c^\infty([\frac14,4])$. It is easy to see by the Young inequality
\begin{equation*}
\begin{split}
\left\|\chi_j\ast|b|^2\right\|^{1/2}_{L^2}\lesssim \|\chi_j\|^{1/2}_{L^1}\left\|b\right\|_{L^4}\lesssim 2^{-j/2}\left\|b\right\|_{L^4},
\end{split}
\end{equation*}
and
\begin{equation*}
\begin{split}
\left\|\chi_i\ast\left((\chi_j\ast|b|^2)^\frac12|b|\right)\right\|_{L^2}
\lesssim& \|\chi_i\|_{L^1}\left\|(\chi_j\ast|b|^2)^\frac12|b|\right\|_{L^2}\\
\lesssim& \|\chi_i\|_{L^1}\big\|\chi_j\ast|b|^2\big\|_{L^2}^\frac12\|b\|_{L^4}\\
\lesssim& 2^{-i}2^{-j/2}\left\|b\right\|^2_{L^4}.
\end{split}
\end{equation*}
Collecting the above estimates,  we obtain
\begin{equation*}
\begin{split}
I\lesssim 2^{-(i+j)}\|b\|^4_{L^4}.
\end{split}
\end{equation*}
This completes  the proof of  Lemma \ref{Integral}.
\end{proof}
Now we return to prove \eqref{eq-goal}.  Applying Lemma \ref{Integral} to \eqref{dec}, we have
\begin{equation}
\begin{split}
&\int_{\R^3}\frac{b(\rho_1)b(\rho_2)b(\rho_3)b(\rho_1-\rho_2+\rho_3)}{\left(1+R|\rho_1-\rho_2||\rho_3-\rho_2|\right)^N}d\rho_1 d\rho_2 d\rho_3\\
\lesssim&\bigg(\sum_{\{(i,j)\in \mathbb{N}^2; i+j\geq k_0\}}2^{-(i+j)}+R^{-N}\sum_{\{(i,j)\in \mathbb{N}^2; i+j\lesssim k_0\}}2^{(N-1)(i+j)}\bigg)
\|b\|^4_{L^4}
\\
\lesssim& R^{-1+\epsilon}\|b\|^4_{L^4}.
\end{split}
\end{equation}
Hence we prove \eqref{eq-goal}, and so, we finish the proof  of \eqref{q=4}.

\end{proof}

We next prove \eqref{q=6} in Proposition \ref{Local}.   We need to prove the following lemma.

\begin{lemma}\label{add-lemma4.6} Let $R\gg1$ and $f\in \mathbb{L}_1$, we have the following estimate for every $0<\epsilon\ll1$
\begin{equation}\label{R>1;6}
\begin{split}
\|(f~d\sigma)^\vee\|_{L^6_{t,x}(\R\times
A_R)}\lesssim R^{-\frac{n-1}3+\epsilon}\|g\|_{L^2_{\rho}H_\omega^{\frac{n-1}3,2}(\mathbb{S}^{n-1}) },\end{split}
\end{equation}
where $g(\xi)=f(|\xi|^2,\xi)$.
\end{lemma}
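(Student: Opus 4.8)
The plan is to run, for $q=6$ with the $L^2$ angular norm, the same three-regime decomposition used for the $L^4$ estimate \eqref{q=4}. Starting from the Hankel representation \eqref{Fouri1} and rescaling $(t,x)$, set $\nu=\nu(k)=k+(n-2)/2$ and split the spherical-harmonic sum according to $\Omega_1=\{k:R\ll\nu(k)\}$, $\Omega_2=\{k:\nu(k)\sim R\}$, $\Omega_3=\{k:\nu(k)\ll R\}$. The pieces over $\Omega_1$ and $\Omega_2$ are already covered by the general-$q$ inequalities established on the way to \eqref{q=4}: applying \eqref{R<k} with $q=6,\ p=2$ bounds the $\Omega_1$-part by $e^{-cR}\|g\|_{L^2_\rho H^{(n-1)/3,2}_\omega}$, and applying \eqref{Rk} with $q=6,\ p=2$ bounds the $\Omega_2$-part by $R^{-(n-1)(1/2-1/6)}\|g\|_{L^2_\rho H^{(n-1)/3,2}_\omega}=R^{-(n-1)/3}\|g\|_{L^2_\rho H^{(n-1)/3,2}_\omega}$, where one uses $1+k\sim R$ on $\Omega_2$ and that \eqref{str} is available for $q=6$ in every dimension $n\ge 2$.

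The remaining, and main, piece is $k\in\Omega_3$. Here I would apply the Sobolev embedding $H^{(n-1)/3,2}(\mathbb{S}^{n-1})\hookrightarrow L^6(\mathbb{S}^{n-1})$ together with \eqref{2.3} and Minkowski's inequality (legitimate since $6\ge2$), which reduces matters to the one-dimensional bound
\begin{equation*}
\Big\|\int_0^\infty e^{-it\rho^2}J_{\nu(k)}(r\rho)a_{k,\ell}(\rho)\rho^{n/2}\varphi(\rho)\,d\rho\Big\|_{L^6_t(\R;L^6_r(S_R))}\lesssim R^{-\frac12+\epsilon}\|a_{k,\ell}\varphi\|_{L^2_\rho},\qquad k\in\Omega_3 ,
\end{equation*}
this reduction also producing a factor $R^{-(2n-5)/6}$ (coming from $r^{-(n-2)/2}$ and the measure $\mu(r)=r^{n-1}dr\sim R^{n-1}dr$ on $S_R$); since $(2n-5)/6+1/2=(n-1)/3$ this yields exactly $R^{-(n-1)/3+\epsilon}\|g\|_{L^2_\rho H^{(n-1)/3,2}_\omega}$. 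Inserting the Barcelo-Cordoba asymptotics $J_\nu(r)=\sqrt{2/\pi}\,(r^2-\nu^2)^{-1/4}\cos\theta(r)+h_\nu(r)=I_\nu(r)+\overline{I_\nu(r)}+h_\nu(r)$, the $h_\nu$-term (using $|h_\nu(r\rho)|\lesssim r^{-1}$ on $\Omega_3$) is controlled by an $\eqref{T}$-type estimate and is strictly better than required, so by symmetry it remains to prove the analogue of the claim \eqref{claim}, namely
\begin{equation*}
\|H_\nu(a)\|_{L^6_{t,r}(\R\times\R)}\lesssim R^{-\frac12+\epsilon}\|a\varphi\|_{L^2_\rho},\qquad H_\nu(a)(t,r)=\chi\big(r/R\big)\int_0^\infty e^{-it\rho^2}I_\nu(r\rho)a(\rho)\rho^{n/2}\varphi(\rho)\,d\rho ,
\end{equation*}
for $\nu=\nu(k)$, $k\in\Omega_3$.

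To prove this I would mimic the four-factor argument behind \eqref{claim} with six factors: raising to the sixth power and integrating in $t$ produces $\delta(\rho_1^2-\rho_2^2+\rho_3^2-\rho_4^2+\rho_5^2-\rho_6^2)$ and the six-fold kernel
\begin{equation*}
K_6(R,\nu;\rho_1,\ldots,\rho_6)=\int_0^\infty\frac{\chi^6(r/R)\,e^{i\phi(r)}}{\prod_{j=1}^6\big((r\rho_j)^2-\nu^2\big)^{1/4}}\,dr,\qquad \phi(r)=\sum_{j=1}^6(-1)^{j+1}\theta(r\rho_j) .
\end{equation*}
On $\Omega_3$ one has $r\sim R\gg\nu$, hence $\prod_j((r\rho_j)^2-\nu^2)^{1/4}\sim r^3$, and from $\theta'(s)=\sqrt{s^2-\nu^2}/s=1-\nu^2/(2s^2)+O(\nu^4/s^4)$ (cf. \eqref{dtheta}) one gets $\phi'(r)=(\rho_1-\rho_2+\rho_3-\rho_4+\rho_5-\rho_6)+O(\nu^2/r^2)$; therefore, on the main regime $|\rho_1-\rho_2+\rho_3-\rho_4+\rho_5-\rho_6|\gtrsim\nu^2/R^2$, repeated integration by parts in $r$ gives, for all $N$,
\begin{equation*}
|K_6(R,\nu;\rho_1,\ldots,\rho_6)|\lesssim R^{-2}\big(1+R|\rho_1-\rho_2+\rho_3-\rho_4+\rho_5-\rho_6|\big)^{-N} ,
\end{equation*}
the thin complementary near-resonant set costing only a $\log R$ factor. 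It then remains to establish the convolution-type bound
\begin{equation*}
\int_{\R^6}\delta(\rho_1^2-\rho_2^2+\rho_3^2-\rho_4^2+\rho_5^2-\rho_6^2)\,\frac{\prod_{j=1}^6|a(\rho_j)|\,\rho_j^{n/2}\varphi(\rho_j)}{\big(1+R|\rho_1-\rho_2+\rho_3-\rho_4+\rho_5-\rho_6|\big)^N}\,d\rho\lesssim R^{-1+\epsilon}\|a\varphi\|_{L^2}^6 ,
\end{equation*}
which together with the prefactor $R^{-2}$ yields $\|H_\nu(a)\|_{L^6_{t,r}}^6\lesssim R^{-3+\epsilon}\|a\varphi\|_{L^2}^6$. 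This last inequality is proved exactly as \eqref{eq-goal}: in the variables $u_j=\rho_j^2$ the constraint becomes linear, so one eliminates one variable (the Jacobian being bounded since all $\rho_j\sim1$), localizes the remaining variables to dyadic shells, and estimates shell by shell with Young's and Hölder's inequalities as in Lemma \ref{Integral}, the dyadic sum converging with the stated $\epsilon$-loss.

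The delicate point is the kernel $K_6$: one must carry out the six-variable stationary/non-stationary phase estimate uniformly in $\nu\in\Omega_3$, and in particular control the near-resonant set on which the leading term $\rho_1-\rho_2+\rho_3-\rho_4+\rho_5-\rho_6$ of $\phi'$ drops below the scale $\nu^2/R^2$. Conceptually this is the obstruction to simply quoting the one-dimensional Strichartz estimate $\|e^{it\partial_x^2}w\|_{L^6_{t,x}(\R\times\R)}\lesssim\|w\|_{L^2}$ for $\int e^{-it\rho^2+ir\rho}a\varphi\,d\rho$: the expansion $\theta(r\rho)=r\rho+\nu^2/(2r\rho)+\text{const}+O(\nu^4/(r\rho)^3)$ carries a non-separable sub-principal phase $\nu^2/(2r\rho)$, and it is exactly the handling of this term that forces the $\epsilon$-loss; as remarked after Proposition \ref{Local}, a more careful treatment of it should remove the loss altogether.
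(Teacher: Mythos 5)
Your treatment of Cases 1 and 2 ($k\in\Omega_1$ and $k\in\Omega_2$) coincides with the paper's: apply \eqref{R<k} and \eqref{Rk} with $q=6$, $p=2$. The divergence, and the problem, is in Case 3. For the oscillatory piece $H_\nu$ you propose to redo the kernel analysis of \eqref{claim} with six factors. This is where the argument has a genuine gap, in two places. First, the phase estimate for $K_6$ does not go through as stated. The whole point of the four-factor computation behind \eqref{decay} is that, \emph{on the resonant set} $\rho_1^2-\rho_2^2=\rho_4^2-\rho_3^2$, the alternating sum of $\theta'(r\rho_j)\rho_j$ collapses exactly into the product form $|\rho_1^2-\rho_2^2|\,|\rho_3^2-\rho_2^2|$ times a positive quantity, uniformly in $\nu\ll r$. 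For six factors there is no such algebraic identity, and your substitute --- expanding $\theta'$ and keeping the linear term $\sum(-1)^{j+1}\rho_j$ --- leaves an error $\tfrac{\nu^2}{2r^2}\sum(-1)^{j+1}\rho_j^{-1}$ that does \emph{not} cancel on the resonant set $\sum(-1)^{j+1}\rho_j^2=0$. Since $\Omega_3$ only imposes $\nu\ll R$, the quantity $\nu^2/R^2$ can be as large as a fixed constant, so your ``thin near-resonant set'' $\{|\sum(-1)^{j+1}\rho_j|\lesssim\nu^2/R^2\}$ need not be thin at all; the claim that it ``costs only a $\log R$ factor'' is unjustified and false for $\nu$ near the top of $\Omega_3$. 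Second, even granting the kernel bound, the final six-linear inequality with $\|a\varphi\|_{L^2}^6$ on the right is not ``exactly as \eqref{eq-goal}'': Lemma \ref{Integral} exploits the product weight $|\rho_1-\rho_2|\,|\rho_3-\rho_2|$ and an $L^4$ norm, whereas you have a single linear localization at scale $R^{-1}$, an $L^2$ norm, and a six-fold product; this is essentially a (perturbed) endpoint one-dimensional $L^6$ Strichartz estimate and requires its own proof.

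The paper avoids all of this. Having already established the claim \eqref{claim}, i.e. $\|H_{\nu}(a)\|_{L^4_{t,r}}\lesssim R^{-1/2+\epsilon}\|a\varphi\|_{L^4}$, it observes that $|I_\nu(r\rho)|\lesssim (r\rho)^{-1/2}\sim R^{-1/2}$ on $\Omega_3$ gives the trivial bound
\begin{equation*}
\|H_{\nu}(a)\|_{L^\infty_{t,r}}\lesssim R^{-1/2}\|a\varphi\|_{L^1},
\end{equation*}
and Riesz--Thorin interpolation of these two bounds (with $\theta=2/3$) yields precisely $\|H_{\nu}(a)\|_{L^6_{t,r}}\lesssim R^{-1/2+\epsilon}\|a\varphi\|_{L^2}$, which is what Case 3 needs; the $h_\nu$-piece is handled by \eqref{T} with $q=6$ exactly as you say. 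I would recommend replacing your $K_6$ analysis by this interpolation: it is shorter, it reuses the only hard estimate in the paper (\eqref{claim}), and it sidesteps the six-fold resonance analysis entirely.
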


\begin{proof} It  suffices to estimate, by a scaling argument, the following quantity
\begin{equation}
\begin{split}
 \bigg\| r^{-\frac{n-2}2}\sum_{k=0}^{\infty}\sum_{\ell=1}^{d(k)}
i^{k}Y_{k,\ell}(\theta) \int_0^\infty e^{- it\rho^2}
J_{\nu(k)}( r\rho)a_{k,\ell}(\rho)\rho^{\frac
n2}\varphi(\rho)~d\rho
\bigg\|_{L^6_{t,x}(\R\times
A_R)}.
\end{split}
\end{equation}
We divide the above integral into three cases.

$\bullet$ Case 1: $k\in \Omega_1:=\{k:R\ll \nu(k)\}$.
Using \eqref{R<k} with $q=6$, we prove
\begin{equation*}
\begin{split}
& \bigg\| r^{-\frac{n-2}2}\sum_{k\in\Omega_1}\sum_{\ell=1}^{d(k)}
i^{k}Y_{k,\ell}(\theta) \int_0^\infty e^{- it\rho^2}
J_{\nu(k)}( r\rho)a_{k,\ell}(\rho)\rho^{\frac
n2}\varphi(\rho)~d\rho
\bigg\|_{L^6_{t,x}(\R\times
A_R)} \\
\lesssim& e^{-cR}
\bigg\|\Big(\sum_{k=0}^\infty\sum_{\ell=1}^{d(k)}(1+k)^{2(n-1)/3}\left|a_{k,\ell}(\rho)\right|^2\Big)^{\frac12}
\varphi(\rho)\bigg\|_{L^{2}_{\rho}}\lesssim e^{-cR} \|g\|_{L_\rho^2 H_\omega^{\frac{n-1}3,2}(\mathbb{S}^{n-1})}.
\end{split}
\end{equation*}

$\bullet$ Case 2: $k\in \Omega_2:=\{k: \nu(k)\sim R
\}$. Applying \eqref{Rk} with $q=6$ and $p=2$, we show
\begin{equation}
\begin{split}
&\bigg\| r^{-\frac{n-2}2}\sum_{k\in\Omega_2}\sum_{\ell=1}^{d(k)}
i^{k}Y_{k,\ell}(\theta) \int_0^\infty e^{- it\rho^2}
J_{\nu(k)}( r\rho)a_{k,\ell}(\rho)\rho^{\frac
n2}\varphi(\rho)~d\rho
\bigg\|_{L^6_{t,x}(\R\times
A_R)}\\
\lesssim&
R^{-(n-1)/3}\|g\|_{L_\rho^2 H_\omega^{\frac{n-1}3,2}(\mathbb{S}^{n-1})}.
\end{split}
\end{equation}

$\bullet$ Case 3: $k\in \Omega_3:=\{k:  \nu(k)\ll R\}$.  We introduce
the operator
$$T_{\nu}(a)(t,r)=\chi\Big(\frac r R\Big)\int_0^\infty
e^{- it\rho^2} h_{\nu}(
r\rho)a(\rho)\rho^{\frac n2}\varphi(\rho) d\rho$$
where $|h_\nu(r)|\leq C/r$ and the operator
$$H_{\nu}(a)(t,r)=\chi\Big(\frac r R\Big)\int_0^\infty
e^{- it\rho^2} I_{\nu}( r\rho)a(\rho)\rho^{\frac n2}\varphi(\rho)
d\rho,$$ where $\nu=\nu(k)=k+(n-2)/2$. Since
$$J_\nu(r)=I_{\nu}(r)+\bar{I}_{\nu}(r)+h_\nu(r),$$
 our aim here is to estimate
\begin{equation*}
\begin{split}
& \bigg\| r^{-\frac{n-2}2}\sum_{k\in\Omega_3}\sum_{\ell=1}^{d(k)}
i^{k}Y_{k,\ell}(\theta) \int_0^\infty e^{- it\rho^2}
J_{\nu(k)}( r\rho)a_{k,\ell}(\rho)\rho^{\frac
n2}\varphi(\rho)~d\rho
\bigg\|_{L^6_{t,x}(\R\times
A_R)}\\
\lesssim& R^{-\frac{n-1}3+\frac12}\bigg(\sum_{k\in\Omega_3}\sum_{\ell=1}^{d(k)}(1+k)^{2(n-1)/3}
\Big( \left\|T_{\nu(k)}(a_{k,\ell})(t,r)\right\|_{L^6_t(\R;L^6_{r}(S_R))}^{2}+\\
&\qquad\qquad \left\|H_{\nu(k)}(a_{k,\ell})(t,r)
\right\|_{L^6_t(\R;L^6_{r}(S_R))}^{2}\Big)\bigg)^{1/2}.
\end{split}
\end{equation*}
By making use of  \eqref{T} with $q=6$, we have
$$\|T_{\nu}(a)(t,r)\|_{L^6_{t,r}}\leq R^{-5/6}\|a\varphi\|_{L^{6/5}}.$$
This implies that
\begin{equation}\label{6.1}
\begin{split}
&\bigg(\sum_{k\in\Omega_3}\sum_{\ell=1}^{d(k)}(1+k)^{2(n-1)/3}
 \left\|T_{\nu(k)}(a_{k,\ell})(t,r)\right\|_{L^6_t(\R;L^6_{r}(S_R))}^{2}\bigg)^{1/2}\\
 \lesssim&
 R^{-5/6}\bigg\|\Big(\sum_{k\in\Omega_3}\sum_{\ell=1}^{d(k)}(1+k)^{2(n-1)/3}
\left|a_{k,\ell}(\rho)\right|^2\Big)^{1/2}\varphi \bigg\|_{L^{6/5}}\\
\lesssim&  R^{-5/6}\|g\|_{L_\rho^2 H_\omega^{\frac{n-1}3,2}(\mathbb{S}^{n-1})}.
\end{split}
\end{equation}

 On the other hand, by \eqref{b3},  one has $|I_\nu(r)|\lesssim r^{-1/2}$ when $k\in\Omega_3$.
 Consider the operator
$$H_{\nu}(a)(t,r)=\chi\Big(\frac r R\Big)\int_0^\infty
e^{- it\rho^2} I_{\nu}(
r\rho)a(\rho)\rho^{\frac n2}\varphi(\rho) d\rho,$$
where $\nu=\nu(k)=k+(n-2)/2$ with $k\in\Omega_3$.

On the one hand, it is easy to see
 \begin{equation*}
\left\|H_{\nu}(a)(t,r)\right\|_{L^\infty_{t,r}(\R\times\R^n)}\lesssim R^{-1/2}\|a \varphi\|_{L^1}.
\end{equation*}
On the other hand, we have the claim that for any $\epsilon>0$
\begin{equation}\label{claim}
\left\|H_{\nu}(a)(t,r)\right\|_{L^4_{t,r}(\R\times\R)}\lesssim
R^{-1/2+\epsilon}\|a \varphi\|_{L^4_\rho}.
\end{equation}
We postpone the proof of this claim to the end of this section.
Hence, by the interpolation of the above two estimates, for any $\epsilon>0$, we obtain that
\begin{equation*}
\left\|H_{\nu}(a)(t,r)\right\|_{L^6_{t,r}(\R\times\R^n)}\lesssim R^{-1/2+\epsilon}\|a \varphi\|_{L^2}.
\end{equation*}
This shows
\begin{equation}\label{6.2}
\begin{split}
&\bigg(\sum_{k\in\Omega_3}\sum_{\ell=1}^{d(k)}(1+k)^{2(n-1)/3}
 \left\|H_{\nu(k)}(a_{k,\ell})(t,r)\right\|_{L^6_t(\R;L^6_{r}(S_R))}^{2}\bigg)^{1/2}\\
 \lesssim&
 R^{-1/2+\epsilon}\Big(\sum_{k\in\Omega_3}\sum_{\ell=1}^{d(k)}(1+k)^{2(n-1)/3}
\left\|a_{k,\ell}(\rho)\varphi(\rho)\right\|^2_{L^{2}}\Big)^{1/2}\\
\lesssim&
 R^{-1/2+\epsilon}\|g\|_{L^2_{\rho}H_\omega^{\frac{n-1}3,2}(\mathbb{S}^{n-1}) }.
\end{split}
\end{equation}
Collecting \eqref{6.1} and \eqref{6.2} yields
\begin{equation*}
\begin{split}
& \bigg\| r^{-\frac{n-2}2}\sum_{k\in\Omega_3}\sum_{\ell=1}^{d(k)}
i^{k}Y_{k,\ell}(\theta) \int_0^\infty e^{- it\rho^2}
J_{\nu(k)}( r\rho)a_{k,\ell}(\rho)\rho^{\frac
n2}\varphi(\rho)~d\rho
\bigg\|_{L^6_{t,x}(\R\times
A_R)}\\
\lesssim& R^{-\frac{n-1}3+\epsilon}\|g\|_{L^2_{\rho}H_\omega^{\frac{n-1}3,2}(\mathbb{S}^{n-1}) }.
\end{split}
\end{equation*}
This implies \eqref{R>1;6}, which completes the proof of Lemma \ref{add-lemma4.6}.

\end{proof}
\begin{proof}[The proof of claim \eqref{claim}] The same argument in the proof the \eqref{Iv} shows the claim \eqref{claim}.
Recall the kernel \eqref{kernel}, it is enough to estimate the integral
\begin{equation*}
\begin{split}
&\left\|H_{\nu}(a)(t,r)\right\|^4_{L^4_{t,r}(\R\times\R^n)}\\
=&\int_{\R^4} \int_\R e^{- it(\rho_1^2-\rho_2^2+\rho_3^2-\rho_4^2)}
K(R,\nu;\rho_1,\rho_2,\rho_3,\rho_4) a(\rho_1)
a(\rho_2)a(\rho_3)a(\rho_4)\\
&\qquad\qquad \beta(\rho_1)\beta(\rho_2)\beta(\rho_3)\beta(\rho_4) dt d\rho_1d\rho_2d\rho_3d\rho_4,
\end{split}
\end{equation*}where $\beta(\rho)=\rho^{\frac n2}\varphi(\rho)$.
 For $b(\rho)=2a(\sqrt{\rho})\beta({\sqrt{\rho})}/\sqrt{\rho}$, therefore we obtain
\begin{equation*}
\begin{split}
&\left\|H_{\nu}(a)(t,r)\right\|^4_{L^4_{t,r}(\R\times\R^n)}\\
&=\int_{\R^4} \delta(\rho_1-\rho_2+\rho_3-\rho_4) K(R,\nu;
\sqrt{\rho_1},\sqrt{\rho_2},\sqrt{\rho_3},\sqrt{\rho_4})  b(\rho_1)
b(\rho_2)b(\rho_3)b(\rho_4)d\rho_1d\rho_2d\rho_3d\rho_4\\
=&\int_{\R^3}
K(R,\nu;
\sqrt{\rho_1},\sqrt{\rho_2},\sqrt{\rho_3},\sqrt{\rho_1-\rho_2+\rho_3})
b(\rho_1)
b(\rho_2)b(\rho_3)b(\rho_1-\rho_2+\rho_3)d\rho_1d\rho_2d\rho_3\\
\lesssim& R^{-2+\epsilon}\|b\|^4_{L^4}\lesssim R^{-2+\epsilon}\|a\varphi\|^4_{L^4} .
\end{split}
\end{equation*}
where we use the kernel estimate \eqref{decay} and \eqref{eq-goal} in the first inequality.
\end{proof}

\section{Local smoothing estimate}

K. M. Rogers \cite{Rogers}  developed an argument  showing that a restriction estimate implies a local smoothing estimate under some suitable conditions.
For the sake of convenience, we closely follow this  argument to prove Corollary \ref{cor}.
In fact, by making use of the standard Littlewood-Paley
argument, it can be reduced to prove the claim
\begin{equation}\label{smoothing}
\begin{split}
\|e^{it\Delta}(1-\Delta_\theta)^{-s/2}u_0\|_{L^q_{t,x}([0,1]\times\R^n)}
\lesssim
N^{(2n(1/2-1/q)-2/q)_+}\left\|u_0\right\|_{L^q_{x}},\quad\forall~
N\gg1
\end{split}
\end{equation}
where
 $$\mathrm{supp}~
\mathcal{F}({(1-\Delta_\theta)^{-s/2}u_0})\subset\{\xi:|\xi|\leq
N\}.$$
Here we denote by $\mathcal{F}$  the Fourier transform. We also use the notation $\hat h$ to express the Fourier transform of $h$.  Let $h=(1-\Delta_\theta)^{-s/2}u_0$. Denote by $P_N$ the
Littlewood-Paley projector, i.e.
$$P_Nh=\mathcal{F}^{-1}\Big(\chi\Big(\frac{|\xi|}N\Big)\hat
h\Big),\quad\; \chi\in \C_c^\infty([1/2,1]).$$
 By the
Littlewood-Paley theory
 and the claim \eqref{smoothing}, one has for
$\alpha>2n(1/2-1/q)-2/q$
\begin{equation*}
\begin{split}
\|e^{it\Delta}h\|^2_{L^q_{t,x}([0,1]\times\R^n)}\lesssim&\|e^{it\Delta}P_{\lesssim1}h\|^2_{L^q_{t,x}([0,1]\times\R^n)}+
\sum_{N\gg1}\left\|e^{it\Delta}P_Nh\right\|^2_{L^q_{t,x}([0,1]\times\R^n)}
\\
\lesssim&\|u_0\|^2_{L_x^q(\R^n)}+\sum_{N\gg1}N^{2[2n(1/2-1/q)-2/q]+}\big\|P_Nu_0\big\|^2_{L^q_{x}}\\
\lesssim& \|u_0\|^2_{L_x^q(\R^n)}+
\bigg\|\Big(\sum_{N\gg1}N^{q\alpha}
\left|P_Nu_0\right|^q\Big)^{1/q}\bigg\|^2 _{L^q_{x}}
\\
\lesssim& \|u_0\|^2_{L_x^q(\R^n)}+\bigg\|\Big(\sum_{N\gg1}N^{2\alpha}\left|P_Nu_0\right|^2\Big)^{1/2}\bigg\|^2_{L^q_{x}}\\
\simeq&\|u_0\|^2_{W^{\alpha,q}_{x}(\R^n)}.
\end{split}
\end{equation*}
Here we use H\"older's inequality for the third inequality, Sobolev imbedding for the fourth one.
Hence we have
\begin{equation*}
\begin{split}
\|e^{it\Delta}u_0\|_{L^q_{t,x}([0,1]\times\R^n)}\lesssim
\|(1-\Delta_\theta)^{s/2}u_0\|_{W^{\alpha,q}_{x}(\R^n)}.
\end{split}
\end{equation*}
Now we are left to prove claim \eqref{smoothing}. Assume $\mathrm{supp}~\hat {f} \subset[0, 1]$. Note that
\begin{equation*}
e^{it\Delta}f=\frac{1}{(it)^{n/2}}\int_{\R^n}
e^{i|x-y|^2/t}f(y)dy,\quad \forall~t\in\R\backslash\{0\}.
\end{equation*}
On the other hand, we have for $t\neq0$
\begin{equation*}
\begin{split}
e^{it\Delta}f=&\int_{\R^n}e^{i(t|\xi|^2+x\cdot\xi)}\hat{f}(\xi)d\xi=e^{-\frac{i|x|^2}{4t}}\int_{\R^n}e^{it|\xi+\frac{x}{2t}|^2}\hat{f}(\xi)d\xi\\
=&\frac{1}{(it)^{n/2}}e^{-\frac{i|x|^2}{4t}}\left(e^{i\frac{\Delta}t}\hat{f}\right)\left(-\frac{x}{2t}\right).
\end{split}
\end{equation*}
So we have for every dyadic number $N$
\begin{equation*}
\begin{split}
\|e^{it\Delta}f\|_{L^q_{t,x}(|t|\sim N^2; |x|\lesssim N^2)}\lesssim& N^{-n}\left\|\left(e^{i\frac{\Delta}t}\hat{f}\right)
\left(-\frac{\bullet}{2t}\right)\right\|_{L^q_{t,x}(|t|\sim N^2; |x|\lesssim N^2)}\\
\lesssim& N^{-n+\frac{2n+4}{q}}\left\|e^{it\Delta}\hat{f}\right\|_{L^q_{t,x}(|t|\sim N^{-2}; |x|\lesssim 1)}.
\end{split}
\end{equation*}
By making use of Theorem \ref{thm}, we obtain for $q>2(n+1)/n$ and $\frac{n+2}q=\frac n{p'}$
\begin{equation}
\begin{split}
\left\|e^{it\Delta}\hat{f}\right\|_{L^q_{t,x}(|t|\sim N^{-2}; |x|\lesssim 1)}\lesssim \|f\|_{L^p_{\mu(r)}(\R^+;H^{s,p}_\theta(\mathbb{S}^{n-1}))}.
\end{split}
\end{equation}
This yields
\begin{equation*}
\begin{split}
\|e^{it\Delta}f\|_{L^q_{t,x}(|t|\sim N^2; |x|\lesssim N^2)}\lesssim N^{-n+\frac{2n+4}{q}}\|f\|_{L^p_{\mu(r)}(\R^+;H^{s,p}_\theta(\mathbb{S}^{n-1}))}.
\end{split}
\end{equation*}
This  implies that
\begin{equation}
\begin{split}
\|e^{it\Delta}(1-\Delta_\theta)^{-s/2}f\|_{L^q_{t,x}(|t|\sim N^2; |x|\lesssim N^2)}\lesssim N^{-n+\frac{2n+4}{q}}\|f\|_{L^p_{x}}.
\end{split}
\end{equation}
For the sake of convenience, we recall \cite [Lemma 8] {Rogers}
\begin{lemma}Let $q\geq p_1\geq p_0$, $r\geq1$ and $I\subset[0,R^2]$. If one has
$$\|e^{it\Delta}f\|_{L^q_x(B_{R^2};L^r_t(I))}\leq CR^s\|f\|_{L^{p_0}(\R^n)}$$

where $R\gg1$, and $f$ is frequency supported in unite ball $\mathbb{B}^n$. Then for all $\epsilon>0$
$$\|e^{it\Delta}f\|_{L^q_x(\R^n;L^r_t(I))}\leq C_\epsilon R^{s+2n(\frac1{p_0}-\frac1{p_1})+\epsilon}\|f\|_{L^{p_1}(\R^n)}.$$

\end{lemma}

Since $q>p$ when $q>2(n+1)/n$, for any $0<\epsilon\ll 1$,
we have by this lemma
\begin{equation*}
\begin{split}
&\|e^{it\Delta}(1-\Delta_\theta)^{-s/2}f\|_{L^q_{t,x}(|t|\sim N^2; x\in\R^n)}\\
\lesssim& N^{-n+\frac{2n+4}{q}+2n(\frac1p-\frac1q)
+\epsilon}\|f\|_{L^q_{x}}\\
\lesssim& N^{n(1-\frac2q)+\epsilon}\|f\|_{L^q_{x}}.
\end{split}
\end{equation*}
Using the scaling argument, if
$${\rm supp}\widehat {f_{k,N}}\subset B_{2^{k/2}N}:=\big\{\xi: |\xi|\in [0, 2^{k/2}N]\big\},\quad\; \forall\; k\geq 0,$$
then
\begin{equation}
\begin{split}
\|e^{it\Delta} (1-\Delta_\theta)^{-\frac{s}2}f_{k,N}\|_{L^q_{t,x}([2^{-k},2^{-k+1}]\times\R^n)}
\lesssim N^{n(1-\frac2q)+\epsilon}(2^{\frac{k}2}N)^{-\frac2q}\left\|f_{k,N}\right\|_{L^q_{x}}.
\end{split}
\end{equation}
Since
$${\rm supp}\hat{h} \subset \{\xi: |\xi|\in [N/2, N]\}\subset B_{2^{k/2}N},\quad\; \forall k\geq2,$$
 we replace $(1-\Delta_\theta)^{-s/2}f_{k,N}$ by $h$ to obtain
\begin{equation}
\begin{split}
\|e^{it\Delta}h\|_{L^q_{t,x}([0,1]\times\R^n)}=&\bigg(\sum_{k\geq0}\|e^{it\Delta} (1-\Delta_\theta)^{-s/2}u_0\|^q_{L^q_{t,x}
([2^{-k},2^{-k+1}]\times\R^n)}\bigg)^{1/q}
\\
\lesssim& \Big(\sum_{k\geq
0}2^{-k}\Big)^{1/q}N^{(2n(1/2-1/q)-2/q)_+}\left\|u_0\right\|_{L^q_{x}}.
\end{split}
\end{equation}
 This proves inequality \eqref{smoothing}.

\begin{center}

\end{center}
\end{document}